\theoremstyle{plain}
\newtheorem{theorem}{Theorem}[section]
\newtheorem{proposition}[theorem]{Proposition}
\newtheorem{lemma}[theorem]{Lemma}
\theoremstyle{definition}
\newtheorem{definition}[theorem]{Definition}
\newtheorem{remark}[theorem]{Remark}
\newtheorem{example}[theorem]{Example}
\numberwithin{equation}{section}
\begin{document}

\title[Isomorphism Invariants for Linear Quasigroups]{Isomorphism Invariants for Linear Quasigroups}
\author[J.D.H.~Smith]{Jonathan D.H.~Smith$^1$}
\author[S.G.~Wang]{Stefanie G.~Wang$^2$}
\address{ Department of Mathematics\\
Iowa State University\\
Ames, Iowa 50011, U.S.A.}
\email{$^1$jdhsmith@iastate.edu\phantom{,}}
\email{$^2$sgwang@iastate.edu}

\keywords{quasigroup, isotopy, group isotope, T-quasigroup, ordinary character, permutation similarity}

\subjclass[2010]{20N05}

\begin{abstract}
For a unital ring $S$, an $S$-linear quasigroup is a unital $S$-module, with automorphisms $\rho$ and $\lambda$ giving a (nonassociative) multiplication $x\cdot y=x^\rho+y^\lambda$. If $S$ is the field of complex numbers, then ordinary characters provide a complete linear isomorphism invariant for finite-dimensional $S$-linear quasigroups. Over other rings, it is an open problem to determine tractably computable isomorphism invariants. The paper investigates this isomorphism problem for $\mathbb{Z}$-linear quasigroups. We consider the extent to which ordinary characters classify $\mathbb{Z}$-linear quasigroups and their representations of the free group on two generators. We exhibit non-isomorphic $\mathbb{Z}$-linear quasigroups with the same ordinary character. For a subclass of $\mathbb{Z}$-linear quasigroups, equivalences of the corresponding ordinary representations are realized by permutational intertwinings. This leads to a new equivalence relation on $\mathbb{Z}$-linear quasigroups, namely permutational similarity. Like the earlier concept of central isotopy, permutational similarity is intermediate between isomorphism and isotopy.
\end{abstract}

\maketitle


\section{Introduction}\label{S:intro}

Quasigroups $(Q, \cdot)$ are nonassociative analogues of groups, retaining the cancelativity of the multiplication.  A pique\footnote{An acronym for ``Pointed Idempotent QUasigroup(E)".} $(P, \cdot, e)$ is a quasigroup with a nullary operation that selects an idempotent element $e$. The inner multiplication group of a pique $P$ is the stabilizer of $e$ in the group of permutations of $P$ generated by all the right and left multiplications.

For a commutative, unital ring $S$, an $S$-linear pique or $S$-linear quasigroup is an $S$-module $A$ equipped with automorphisms $\rho$ and $\lambda$ that furnish a pique multiplication $x\cdot y=x^\rho+y^\lambda$, with $0$ as the pointed idempotent. Two $S$-linear piques are $S$-isomorphic if they are isomorphic via an invertible $S$-linear transformation (module isomorphism).

Finite-dimensional $\mathbb{C}$-linear piques are classified up to $\mathbb{C}$-linear isomorphism by their so-called ordinary characters, obtained from the representation of the free group on two generators that they afford. Given a finite $\mathbb{Z}$-linear pique $A$, one may linearize the underlying combinatorial structure to obtain a $\mathbb{C}$-linear pique $\mathbb{C}A$, the so-called complexification of the $\mathbb{Z}$-linear pique $A$. Our primary concern is the extent to which ordinary characters of complexifications classify $\mathbb{Z}$-linear piques. The main result (Theorem~\ref{genfc}) shows that for a large class of $\mathbb{Z}$-linear pique structures, namely on cyclic groups of order not divisible by $8$, two piques that have the same complexified ordinary character are permutationally similar, i.e., the permutation actions of their respective inner multiplication groups are similar.

\subsection{Outline of the paper}\label{S:outline}

We begin with definitions and examples of quasigroups and linear quasigroups in Section \ref{S:LinQ}. We define linear quasigroups and their $S$-linear representations for a commutative unital ring $S$. Theorem \ref{isopique} identifies $S$-linear piques with the $S$-linear representations of the free group on two generators that they afford. This allows us to study the representations in lieu of the piques. Permutational similarity is defined in \S\ref{SS:permsmly}, while \S\ref{SS:ordichar} defines ordinary characters and the complexifications of $\mathbb{Z}$-linear piques. Theorem~\ref{T:issamech} observes that isomorphic $\mathbb{Z}$-linear piques have the same ordinary character.

The central Section~\ref{S:Zlinchar} considers isomorphism invariants for $\mathbb{Z}$-linear piques on cyclic groups of finite order not divisible by $8$. Linear piques defined on $\mathbb{Z}/_n$ for $n<5$ are classified up to isomorphism by the ordinary characters of their complexifications, the permutation characters introduced in Definition~\ref{D:char} (\S\ref{SS:linless5}). However, ordinary character theory does not suffice to cover all linear piques. Indeed, Proposition~\ref{P:Z5pspcni} exhibits non-isomorphic pique structures on $\mathbb{Z}/_5$ having the same permutation character. The main result (Theorem~\ref{genfc}) states that linear piques defined on cyclic groups of order not divisible by $8$, having the same permutation character, are permutationally similar.

The concluding Section~\ref{S:power2piques} examines the classification of $\mathbb{Z}$-linear pique structures on $\mathbb{Z}/_{2^n}$ for $n\geq 3$.  We exhibit pique structures on the cyclic group of order $16$, with the same permutation character, which are neither isomorphic nor permutationally similar (Theorem~\ref{T:16epcnps}).

\subsection{Related invariants}

Given a commutative, unital ring $S$ and an $S$-module $A$, the $S$-linear piques constructed on $A$ are all isotopic to the abelian group $(A,+,0)$. As such, their classification up to isomorphism may be regarded as a special case of the main problem considered by Dr\'{a}pal in \cite{GIHA}, namely the isomorphism problem for isotopes of a given (not necessarily abelian) group. However, the general solution to the isomorphism problem offered by \cite{GIHA} is computationally intractable, leaving open the search for less powerful but more accessible invariants. This situation is analogous to that prevailing in knot theory, where the existence of complete invariants does not preclude the continuing search for weaker invariants with a lower computational complexity.

\subsection{Conventions}

The paper follows the general algebraic convention of placing a function to the right of its argument, either on the line or as a superfix. This convention allows composites of functions to be read in natural order from left to right, and serves to minimize the occurrence of brackets, which otherwise proliferate when one studies non-associative structures.

\section{Linear piques}\label{S:LinQ}

\subsection{Quasigroups and piques}

\begin{definition}
A \emph{quasigroup} $(Q, \cdot,\backslash, /)$ is an algebra with three binary operations, multiplication $\cdot$, left division $\backslash$, and right division $/$, such that for all $x, y\in Q$,
\begin{align}
y\backslash(y\cdot x)&=x=(x\cdot y)/y\\
y\cdot (y\backslash x)&=x=(x/y)\cdot y
\end{align}
 are satisfied.
\end{definition}

\begin{definition} A \emph{pique} $(Q,\cdot,/,\backslash, e)$ is a quasigroup with a pointed idempotent element $e$ such that $e\cdot e=e$.
\end{definition}

\begin{definition}\cite[\S2.4]{IQTR}
Let $(Q,\cdot,/,\backslash, e)$ be a pique. The stabilizer of $e$ in the group of permutations of $Q$ generated by all the right multiplications $R(q)\colon x\mapsto x\cdot q$ and left multiplications $L(q)\colon x\mapsto q\cdot x$ (for $q\in Q$) is called the \emph{inner multiplication group} of the pique.
\end{definition}

A pique is a pointed set, where the idempotent element serves as the basepoint.  Maps between pointed sets send basepoint to basepoint. For pique homomorphisms, the pointed idempotent element of the domain maps to the pointed idempotent element of the codomain.

\begin{example}Each group is an associative pique, with the identity element as the pointed idempotent element. The inner multiplication group is the inner automorphism group.
\end{example}

\begin{example} Integers under subtraction form a nonassociative pique, with $0$ as the pointed idempotent. The unique nontrivial element of the inner multiplication group is negation.
\end{example}

\subsection{Linear piques}

\begin{definition}
Suppose that $S$ is a commutative, unital ring. A pique $(A,\cdot, /,\backslash,0)$ is said to be $S$-\emph{linear} if there is a unital $S$-module structure $(A,+,0)$, with automorphisms $\lambda$ and $\rho$ such that
\begin{align}
x\cdot y=x^\rho+y^\lambda, ~x/y=(x-y^\lambda)^{\rho^{-1}}, \text{and}~ x\backslash y=(y-x^\rho)^{\lambda^{-1}}
\end{align}
for $x, y\in A$.
\end{definition}

We identify $\lambda, \rho$ as the left and right multiplications by the pointed idempotent $0$.


\begin{example}
On the one hand, the quasigroup $(\mathbb{Z}/_4, x\circ_1 y)$ with the nonassociative multiplication $x\circ_1y=x(1\;2\;3)+y(1\;2)$ is a pique with 0 as the pointed idempotent element. However, neither (1\ 2\ 3) nor (1\ 2) is an automorphism of $\mathbb{Z}/_4$. On the other hand, the quasigroup $(\mathbb{Z}/_4, x\circ_2 y)$ with the nonassociative multiplication $x\circ_2y=x(1\ 3)+y(1\ 3)$ is also a pique with 0 as the pointed idempotent element. More importantly, the permutation $(1\;3)$ corresponds to the automorphism of $\mathbb{Z}/_4$ defined by $x\mapsto 3x$, so $(\mathbb{Z}/_4, x\circ_2 y)$ is a $\mathbb{Z}$-linear pique.
\end{example}

\begin{example}\cite[\S3]{Sm153}
Linear representations of two-generated groups are equivalent to piques.
\end{example}

\begin{remark}
Linear piques, and their shifted versions $x\cdot y=x^\rho+y^\lambda+c$ (also described as ``T-quasigroups" \cite{GBAT, NK}), have been studied for potential applications in cryptography and related fields \cite{DOSA}.
\end{remark}


\subsection{Equivalent representations}

Throughout this section, $S$ will denote a commutative, unital ring.

\begin{definition}\label{D:afforequ}
Let $\langle R, L\rangle$ be the free group on the doubleton $\{R,L\}$.
\begin{enumerate}
\item[$(\mathrm{a})$]
Let $(A, \cdot,/,\backslash, 0)$ be an $S$-linear pique with
$
x\cdot y=x^\rho+y^\lambda
$.
Then the group homomorphism
$$
\alpha\colon\langle R, L\rangle\to\mathrm{Aut}_S(A, +, 0);
R\mapsto\rho\,,\ L\mapsto\lambda
$$
is described as the \emph{$S$-linear representation} that is \emph{afforded} by $(A,\cdot, /,\backslash, 0)$.
\item[$(\mathrm{b})$]
Consider two $S$-modules $(A,+,0)$ and $(B,+,0)$. Then corresponding $S$-linear representations $\alpha\colon\langle R, L\rangle\to\mathrm{Aut}_S(A,+,0)$ and $\beta\colon\langle R, L\rangle\to\mathrm{Aut}_S(B,+,0)$ are
\emph{equivalent} whenever there exists an $S$-module isomorphism $f: A\to B$ such that for all $a$ in $A$ and $g$ in $\langle R, L\rangle$, the diagram
\begin{equation}\label{E:intrtwin}
\xymatrix{
A \ar[r]^{g^\alpha} \ar[d]_{f}
&
A \ar[d]^{f}
\\
B \ar[r]_{g^\beta}
&
B
}
\end{equation}
commutes. We call $f$ the \emph{intertwining}.
\end{enumerate}
\end{definition}

Note that the pair of equations
\begin{equation}\label{E:equivequ}
R^\alpha f=fR^\beta
\qquad
\mbox{ and }
\qquad
L^\alpha f=fL^\beta
\end{equation}
is equivalent to the commuting of \eqref{E:intrtwin}. Alternatively, one may require that the diagram
\begin{equation}\label{E:intrtwLR}
\xymatrix{
A \ar[d]_{f}
&
A \ar[r]^{R^\alpha} \ar[d]_{f}\ar[l]_{L^\alpha}
&
A \ar[d]^{f}
\\
B
&
B \ar[r]_{R^\beta} \ar[l]^{L^\beta}
&
B
}
\end{equation}
commutes.

\begin{lemma}\label{L:pisomequ}
Suppose that $f\colon(A, \circ_1)\to(B,\circ_2)$ is a pique isomorphism between $S$-linear piques $(A, \circ_1)$ and $(B,\circ_2)$. Let $\alpha$ and $\beta$ be the respective $S$-linear representations that they afford. Then the equations \eqref{E:equivequ} hold.
\end{lemma}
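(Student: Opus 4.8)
The plan is to reduce everything to the observation, recorded just before this subsection, that the module automorphisms furnishing a linear pique are nothing but the right and left multiplications by the pointed idempotent $0$. Concretely, since $x\circ_1 y=x^{R^\alpha}+y^{L^\alpha}$, setting $y=0$ and using that the automorphism $L^\alpha$ fixes $0$ gives $x\circ_1 0=x^{R^\alpha}+0^{L^\alpha}=x^{R^\alpha}$, so that $R^\alpha$ is right multiplication by $0$ in $(A,\circ_1)$; symmetrically $0\circ_1 x=0^{R^\alpha}+x^{L^\alpha}=x^{L^\alpha}$ identifies $L^\alpha$ with left multiplication by $0$. The analogous identities $x\circ_2 0=x^{R^\beta}$ and $0\circ_2 x=x^{L^\beta}$ hold in $(B,\circ_2)$.

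Next I would invoke the two defining features of a pique isomorphism. First, $f$ respects the multiplication: $(x\circ_1 y)^f=x^f\circ_2 y^f$ for all $x,y\in A$. Second, being a map of pointed sets, $f$ carries the basepoint of $(A,\circ_1)$ to that of $(B,\circ_2)$, i.e.\ $0^f=0$, exactly as noted for pique homomorphisms in Section~\ref{S:LinQ}.

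The verification of \eqref{E:equivequ} is then a direct computation, read throughout in the left-to-right composition order dictated by the paper's conventions. For the right multiplication, for each $x\in A$ one has
\[
x^{R^\alpha f}=(x\circ_1 0)^f=x^f\circ_2 0^f=x^f\circ_2 0=(x^f)^{R^\beta}=x^{fR^\beta},
\]
whence $R^\alpha f=fR^\beta$. The second equation is entirely symmetric, using $0\circ_1 x$ and $0\circ_2 x^f$ in place of $x\circ_1 0$ and $x^f\circ_2 0$, and yields $L^\alpha f=fL^\beta$.

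Since the argument uses only the preservation of multiplication and of the basepoint, it requires nothing about $S$-linearity of the underlying map $f$; the sole point demanding care is the bookkeeping of the right-action convention, so that each intertwining is written in the correct order. Accordingly I expect no genuine obstacle here: the content of the lemma is essentially the identification of $R^\alpha,L^\alpha$ (and $R^\beta,L^\beta$) with multiplications by $0$, after which the homomorphism property does the rest.
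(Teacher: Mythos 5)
Your proposal is correct and follows essentially the same route as the paper: identify $R^\alpha,L^\alpha$ (and $R^\beta,L^\beta$) with the right and left multiplications by the pointed idempotent $0$, then compute $aR^\alpha f=(a\circ_1 0)^f=a^f\circ_2 0=a^fR^\beta$ and its left-handed analogue using the homomorphism and basepoint-preservation properties of $f$. The paper's proof is exactly this two-line computation.
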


\begin{proof}
One has
\begin{align*}
aR^{\alpha}f
&=(a\circ_10)^f=a^f\circ_20=a^f R^{\beta}\
\mbox{ and}
\\
aL^{\alpha}f
&=(0\circ_1 a)^f=0\circ_2 a^f=a^f L^{\beta}.
\end{align*}
for each element $a$ of $A$.
\end{proof}

\begin{theorem}\label{isopique}
Let $(A, \circ_1)$ and $(B, \circ_2)$ be two $S$-linear piques. Then they are isomorphic by an $S$-linear transformation $f\colon A\to B$ if and only if the $S$-linear representations they afford are equivalent.
\end{theorem}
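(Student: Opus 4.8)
The plan is to treat the two implications separately, with the forward implication essentially supplied by Lemma~\ref{L:pisomequ} and the bulk of the work residing in the converse. For the forward direction, suppose $f\colon(A,\circ_1)\to(B,\circ_2)$ is an $S$-linear pique isomorphism. Being a bijective $S$-linear transformation, $f$ is in particular an $S$-module isomorphism, and Lemma~\ref{L:pisomequ} immediately yields the equations \eqref{E:equivequ}. By the note following Definition~\ref{D:afforequ}, these two generator equations are equivalent to the commuting of the square \eqref{E:intrtwin} for every $g\in\langle R,L\rangle$, so the afforded representations $\alpha$ and $\beta$ are equivalent, with $f$ serving as the intertwining.

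For the converse, I would assume the afforded representations are equivalent, so that there is an $S$-module isomorphism $f\colon A\to B$ satisfying $R^\alpha f=fR^\beta$ and $L^\alpha f=fL^\beta$. The key observation is that $f$, being $S$-linear, is additive; combined with the pique multiplications $a\circ_1 b=a^{R^\alpha}+b^{L^\alpha}$ and $a\circ_2 b=a^{R^\beta}+b^{L^\beta}$, this lets me compute, for all $a,b\in A$,
\begin{equation*}
(a\circ_1 b)^f=\bigl(a^{R^\alpha}+b^{L^\alpha}\bigr)^f=a^{R^\alpha f}+b^{L^\alpha f}=a^{fR^\beta}+b^{fL^\beta}=(a^f)^{R^\beta}+(b^f)^{L^\beta}=a^f\circ_2 b^f,
\end{equation*}
so that $f$ preserves multiplication. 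Here the additivity of $f$ is exactly what converts the componentwise intertwining of $R^\alpha,L^\alpha$ with $R^\beta,L^\beta$ into the multiplicativity of $f$.

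It then remains to upgrade this multiplication-preserving bijection to a genuine pique isomorphism. Since $f$ is a multiplicative bijection, it automatically respects the two divisions: applying $f$ to the defining identity $(a/b)\cdot b=a$ and invoking injectivity recovers $(a/b)^f=a^f/b^f$, and dually for $\backslash$. Moreover $0^f=0$ because $f$ is additive, so the pointed idempotent is preserved. Hence $f$ is an isomorphism of piques, completing the converse.

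I do not anticipate a serious obstacle: the verification is essentially linear-algebraic, and the two genuinely structural points are already handled by the surrounding material, namely the reduction from the commuting of \eqref{E:intrtwin} for all $g\in\langle R,L\rangle$ to the two generator equations \eqref{E:equivequ} recorded after Definition~\ref{D:afforequ}, and the reduction of a pique homomorphism to a multiplicative map via the quasigroup division identities. The only subtlety worth flagging explicitly is that the argument relies on $f$ being additive (a consequence of $S$-linearity) rather than merely multiplicative, since it is additivity that permits splitting $(a^{R^\alpha}+b^{L^\alpha})^f$ across the sum in the central computation.
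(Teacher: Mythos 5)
Your proposal is correct and follows essentially the same route as the paper: the forward direction via Lemma~\ref{L:pisomequ}, and the converse via the additivity computation $(a\circ_1 b)^f=a^{R^\alpha f}+b^{L^\alpha f}=a^f\circ_2 b^f$. The only difference is that you spell out the (standard) step that a multiplicative bijection of quasigroups preserves the divisions and the basepoint, which the paper leaves implicit.
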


\begin{proof} Let $f\colon(A, \circ_1)\to(B,\circ_2)$ be an $S$-linear pique isomorphism. Suppose that $\alpha\colon\langle R, L \rangle\to\mathrm{Aut}(A, +, 0)$ and $\beta\colon\langle R, L\rangle\to\mathrm{Aut}(B,+,0)$ are the respective $S$-linear representations afforded by the $S$-linear piques. By Lemma~\ref{L:pisomequ}, the equations \eqref{E:equivequ} hold. It follows that $f$ is an intertwining witnessing the equivalence of $\alpha$ and $\beta$.

Now let $\alpha\colon \langle R, L\rangle\to\mathrm{Aut}(A,+,0)$ and $\beta\colon\langle R, L\rangle\to\mathrm{Aut}(B,+,0)$ be equivalent $S$-linear representations, with an intertwining $f\colon A\to B$. Then for $x, y$ in $A$, one has
\begin{align*}
	(x\circ_1y)f&=(xR^{\alpha}+yL^{\alpha})f\\
				&=(xR^{\alpha})f+(yL^{\alpha})f\\
				&=xf R^{\beta}+yf L^{\beta}\\
				&=xf\circ_2 yf\, ,
\end{align*}
so that $f\colon(A, \circ_1)\to(B, \circ_2)$ is an $S$-linear pique isomorphism.
\end{proof}

\subsection{Permutational similarity}\label{SS:permsmly}

In what follows, we will consider a modified version of the commuting diagram \eqref{E:intrtwLR}.

\begin{definition}\label{D:permsmly}
Let $A$ be a finite abelian group, with $\mathbb{Z}$-linear pique structures $(A,\circ_1)$ and $(A,\circ_2)$ affording respective representations
$$
\alpha_i\colon\langle R,L\rangle\to\mathrm{Aut}(A,+,0)
$$
for $i=1,2$. Then the piques $(A,\circ_1)$ and $(A,\circ_2)$, or the representations they afford, are said to be \emph{permutationally similar}, via a permutation $\pi$ of the underlying set $A$, if the diagram
\begin{equation}\label{E:persimdg}
\xymatrix{
A \ar[d]_{\pi}
&
A \ar[r]^{R^{\alpha_1}} \ar[d]_{\pi}\ar[l]_{L^{\alpha_1}}
&
A \ar[d]^{\pi}
\\
A
&
A \ar[r]_{R^{\alpha_2}} \ar[l]^{L^{\alpha_2}}
&
A
}
\end{equation}
commutes. In other words, the permutation $\pi$ conjugates both $R^{\alpha_1}$ to $R^{\alpha_2}$ and $L^{\alpha_1}$ to $L^{\alpha_2}$ within the permutation group $A!$ of the set $A$.
\end{definition}

Consider two permutationally similar piques $(A,\circ_1)$ and $(A,\circ_2)$ as in Definition~\ref{D:permsmly}. If $\pi$ is not an automorphism of the abelian group $(A,+,0)$, then the permutational similarity of representations furnished by $\pi$ is not an equivalence in the sense of Definition~\ref{D:afforequ}. On the other hand, since both the piques are isotopic to the abelian group $A$, they are mutually isotopic. Furthermore, Theorem~\ref{isopique} shows that if two $\mathbb{Z}$-linear piques on the abelian group $A$ are isomorphic, then they are permutationally similar. Thus permutational similarity is a relationship intermediate between isotopy and isomorphism. As such, it is analogous to the relationship of central isotopy \cite[\S3.4]{IQTR}.

\subsection{Ordinary characters of $\mathbb{C}$-linear piques}\label{SS:ordichar}


\begin{definition}
Let $G$ be a group. For a complex vector space $V$, let $\mathsf{GL}(V)$ be its group of automorphisms.
\begin{enumerate}
\item[$(\mathrm{a})$]
An \emph{ordinary linear representation} of $G$ is defined as a homomorphism $\rho: G\to \mathsf{GL}(V)$, for some finite-dimensional complex vector space $V$.
\item[$(\mathrm{b})$]
The (\emph{ordinary}) \emph{character} of an ordinary linear representation $\rho\colon G\to\mathsf{GL}(V)$ is the function $\chi$ or $\chi_\rho\colon G\to \mathbb{C}; g\mapsto \mathrm{Tr}(g\rho)$.
\end{enumerate}
\end{definition}

\begin{definition}\label{D:cmplxftn}
Let $(A, \cdot, /,\backslash, 0)$ be a finite $\mathbb{Z}$-linear pique, affording the $\mathbb{Z}$-linear representation $\alpha:\langle R, L\rangle\to\mathrm{Aut}(A,+,0)$. Let $\mathbb{C}A$ be the complex vector space with basis $A$. Then the \emph{complexification} of $(A, \cdot, /,\backslash, 0)$ is the $\mathbb{C}$-linear pique structure $(\mathbb{C}A, \cdot, /,\backslash, 0)$ obtained by extension of the pique structure $(A, \cdot, /,\backslash, 0)$. Thus
$$
\alpha_\mathbb{C}\colon
R\mapsto(\mathbb{C}A\to\mathbb{C}A;a\mapsto aR^\alpha)\,,\
L\mapsto(\mathbb{C}A\to\mathbb{C}A;a\mapsto aL^\alpha)
$$
serves to specify the $\mathbb{C}$-linear representation $\alpha_\mathbb{C}$ that is afforded by the complexification of $(A, \cdot, /,\backslash, 0)$.
\end{definition}

\begin{theorem}\label{T:issamech}
Let $f\colon(A, \cdot, /,\backslash, 0)\to(B, \cdot, /,\backslash, 0)$ be an isomorphism of finite $\mathbb{Z}$-linear piques affording respective $\mathbb{Z}$-linear representations $\alpha$ and $\beta$. Then the respective $\mathbb{C}$-linear representations $\alpha_\mathbb{C}$ and $\beta_\mathbb{C}$ of their complexifications have the same ordinary character.
\end{theorem}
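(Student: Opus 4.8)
The plan is to reduce the statement to the classical fact that conjugate (similar) complex representations have equal ordinary characters. First I would invoke Theorem~\ref{isopique}: since $f$ is a $\mathbb{Z}$-linear pique isomorphism, the $\mathbb{Z}$-linear representations $\alpha$ and $\beta$ that it affords are equivalent, with $f$ itself serving as the intertwining. Concretely, Lemma~\ref{L:pisomequ} supplies the generator relations $R^\alpha f = f R^\beta$ and $L^\alpha f = f L^\beta$ recorded in \eqref{E:equivequ}.

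Next I would promote these relations from the generators $R, L$ to an arbitrary word $g \in \langle R, L\rangle$. Because $\alpha$ and $\beta$ are group homomorphisms, a short induction on the length of $g$, using $(gh)^\alpha = g^\alpha h^\alpha$ together with the analogous identity for $\beta$ and the right-hand-function convention, yields $g^\alpha f = f g^\beta$ for every $g$.

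The core step is to complexify. The map $f$ restricts to a bijection between the bases $A$ and $B$ of the complex vector spaces $\mathbb{C}A$ and $\mathbb{C}B$, so it extends uniquely to a $\mathbb{C}$-linear isomorphism $\mathbb{C}f\colon \mathbb{C}A \to \mathbb{C}B$. I would then verify that the intertwining survives complexification: both $g^{\alpha_\mathbb{C}}(\mathbb{C}f)$ and $(\mathbb{C}f)\,g^{\beta_\mathbb{C}}$ are $\mathbb{C}$-linear, and on a basis element $a \in A$ they agree, since $a\,g^{\alpha_\mathbb{C}}(\mathbb{C}f) = (a\,g^\alpha)^f = a\,g^\alpha f = a\,f\,g^\beta = a^f\,g^\beta = a\,(\mathbb{C}f)\,g^{\beta_\mathbb{C}}$ by Definition~\ref{D:cmplxftn} and the relation established in the previous step. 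Hence $g^{\alpha_\mathbb{C}}(\mathbb{C}f) = (\mathbb{C}f)\,g^{\beta_\mathbb{C}}$, so that $g^{\beta_\mathbb{C}} = (\mathbb{C}f)^{-1}\,g^{\alpha_\mathbb{C}}\,(\mathbb{C}f)$.

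Finally, conjugate linear transformations share the same trace, so $\mathrm{Tr}(g^{\alpha_\mathbb{C}}) = \mathrm{Tr}(g^{\beta_\mathbb{C}})$ for every $g \in \langle R, L\rangle$; that is, $\chi_{\alpha_\mathbb{C}} = \chi_{\beta_\mathbb{C}}$. I do not anticipate a genuine obstacle here, as the argument is the representation-theoretic commonplace that similar representations are character-equal. The only place demanding care is the core step, namely confirming that the set-level intertwining $f$ lifts to a \emph{linear} intertwining $\mathbb{C}f$ of the complexified representations; this hinges on the fact that $f$ carries the basis $A$ onto the basis $B$, so that checking the relation on basis vectors and extending by linearity suffices.
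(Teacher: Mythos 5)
Your proposal is correct and follows essentially the same route as the paper: extend the intertwining relations of Lemma~\ref{L:pisomequ} to all of $\langle R,L\rangle$, lift $f$ to the $\mathbb{C}$-linear isomorphism $\mathbb{C}f$ by checking on the basis, and conclude from invariance of trace under conjugation. The only difference is that you spell out the induction from generators to arbitrary words, which the paper leaves implicit.
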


\begin{proof}
The bijection $f\colon A\to B$ may be extended to a unique $\mathbb{C}$-linear isomorphism $\mathbb{C}f\colon\mathbb{C}A\to\mathbb{C}B$. By Lemma~\ref{L:pisomequ}, one has $g^{\alpha}f=fg^{\beta}$ for all $g$ in $\langle R,L\rangle$. By linearity, one then has $g^{\alpha_\mathbb{C}}(\mathbb{C}f)=(\mathbb{C}f)g^{\beta_\mathbb{C}}$ for all $g$ in $\langle R,L\rangle$. Let $\chi_A$ and $\chi_B$ be the respective characters of $\alpha_\mathbb{C}$ and $\beta_\mathbb{C}$. Then
\begin{align*}
\chi_B(g\beta_\mathbb{C})
&=\mathrm{Tr}(g{\beta_\mathbb{C}})
=\mathrm{Tr}\big((\mathbb{C}f)^{-1}g^{\alpha_\mathbb{C}}(\mathbb{C}f)\big)
=\mathrm{Tr}(g{\alpha_\mathbb{C}})
=\chi_A(g\alpha_\mathbb{C})
\end{align*}
for each $g$ in $\langle R,L\rangle$.
\end{proof}

\begin{remark}
Although the result will not be needed for subsequent work in the current paper, it should be noted that Theorem~\ref{isopique}, along with Theorem~12.4 of \cite{IQTR}, implies that finite-dimensional $\mathbb{C}$-linear quasigroups are classified, up to $\mathbb{C}$-linear isomorphism, by their ordinary characters.
\end{remark}

\section{Linear piques on finite cyclic groups} \label{S:Zlinchar}

\subsection{Permutation characters}

The following definition provides a purely combinatorial specification for the character of the ordinary representation that is afforded by the complexification of a finite $\mathbb{Z}$-linear pique (compare \cite[Exercise~2.2]{LRFG}).

\begin{definition}\label{D:char}
Let $(A, \cdot, /,\backslash, 0)$ be a finite $\mathbb{Z}$-linear pique, affording the $\mathbb{Z}$-linear representation $\alpha:\langle R, L\rangle\to\mathrm{Aut}(A,+,0)$. For an element $g$ of $\langle R, L\rangle$, the \emph{permutation character} $\chi(g)$ is the number of fixed points of the permutation $g^\alpha$ of the set $A$.
\end{definition}

Although the group $\langle R, L\rangle$ is infinite, the permutation character is determined by the fixed-point numbers for each member of the finite set $\langle R, L\rangle^\alpha$ of permutations of $A$, the inner multiplication group of the pique  $(A, \cdot, /,\backslash, 0)$. We generally use cycle notation for permutations of $A$, recognizing the number of fixed points of a permutation as the number of one-cycles in its cycle decomposition.

For $1<n\in\mathbb{Z}$, we will consider $\mathbb{Z}$-linear piques defined on finite cyclic groups $(\mathbb{Z}/_n,+.0)$. We write $(\mathbb{Z}/_n)^*$ for the group of units of the monoid $(\mathbb{Z}/_n,\cdot, 1)$, the set of residues coprime to $n$. We use the isomorphism
$$
(\mathbb{Z}/_n)^*\to\mathrm{Aut}(\mathbb{Z}/_n,+,0);
r\mapsto(x\mapsto rx)
$$
\cite[5.7.11]{Scott} to identify automorphisms of finite cyclic groups. Thus the order of the automorphism group $\mathrm{Aut}(\mathbb{Z}/_n,+,0)$ is given by the Euler function $\varphi(n)$. We note the following for future reference.

\begin{lemma}\label{fixedpoints}
Let $p$ be a prime number, and let $k$ be a positive integer. Then an automorphism of $\mathbb{Z}/_{p^k}$ has $p^j$ many fixed points, for some $0<j\leq k$.
\end{lemma}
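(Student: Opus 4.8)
The plan is to turn the statement into a subgroup-counting problem for the cyclic $p$-group $\mathbb{Z}/_{p^k}$. Using the identification of $\mathrm{Aut}(\mathbb{Z}/_{p^k},+,0)$ with $(\mathbb{Z}/_{p^k})^*$ recalled just above the lemma, I would write the given automorphism as multiplication by a unit $r$. An element $x$ is then a fixed point exactly when $(r-1)x\equiv 0\pmod{p^k}$, so the fixed-point set is the kernel of multiplication by $r-1$ on $\mathbb{Z}/_{p^k}$; in particular it is a subgroup. Since every subgroup of $\mathbb{Z}/_{p^k}$ is cyclic of order one of $p^0,p^1,\dots,p^k$, the number of fixed points is a power $p^j$ of $p$, and the containment in $\mathbb{Z}/_{p^k}$ gives the upper bound $j\le k$.

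It remains to establish the lower bound $j\ge 1$, equivalently that $p$ divides the number of fixed points. For this I would appeal to the fixed-point congruence for the action of a $p$-group $P$ on a finite set $X$, namely $|X^P|\equiv|X|\pmod p$. Applying it with $X$ the underlying $p^k$-element set of $\mathbb{Z}/_{p^k}$ and with $P$ the cyclic group generated by the automorphism, the fixed-point set of the automorphism coincides with $X^P$, while $|X|=p^k\equiv 0\pmod p$ because $k\ge 1$. The congruence then yields $p\mid p^j$, so that $j\ge 1$, and together with the preceding paragraph this delivers $0<j\le k$.

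The step I expect to be the main obstacle is precisely the hypothesis needed to invoke the congruence: it demands that $P$ be a $p$-group, i.e.\ that the automorphism have order a power of $p$, equivalently that $r\equiv 1\pmod p$ so that $p\mid(r-1)$. This is exactly the point where the $p$-power value $p^j$ of the fixed-point count must be pushed below by one, and it is where the specific automorphisms in play—rather than an arbitrary unit of $(\mathbb{Z}/_{p^k})^*$—have to be used. I would therefore isolate this condition, showing that the relevant $r$ reduces to $1$ modulo $p$ (equivalently, lies in the Sylow $p$-subgroup of the unit group), after which the subgroup and valuation computations of the first two paragraphs close the argument. The remaining ingredients, the classification of subgroups of $\mathbb{Z}/_{p^k}$ and the $p$-group fixed-point congruence, are standard and routine.
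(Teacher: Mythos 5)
Your first paragraph is, in substance, exactly the paper's proof: the fixed points of an automorphism form a subgroup (your description of it as the kernel of multiplication by $r-1$ makes this concrete), and Lagrange's Theorem forces its order to be a divisor $p^j$ of $p^k$. That argument is correct and complete --- but note that it only yields $0\le j\le k$, since the trivial subgroup is a possible kernel.

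The rest of your proposal chases the strict lower bound $j\ge 1$, and here there is a genuine problem, though not the one you anticipated: the bound is simply false for odd $p$, so no repair of the fixed-point congruence argument can succeed. You correctly diagnose that invoking $|X^P|\equiv|X|\pmod p$ requires $\langle\theta\rangle$ to be a $p$-group, i.e.\ $r\equiv 1\pmod p$; but the lemma quantifies over \emph{all} automorphisms, and there is no way to ``isolate'' that condition. For instance, $x\mapsto 2x$ on $\mathbb{Z}/_5$ is the $4$-cycle $(1\;2\;4\;3)$ with exactly one fixed point, so $j=0$ occurs; likewise the paper's own Table~\ref{z3char} records $\chi=1$ for the automorphism $x\mapsto 2x$ of $\mathbb{Z}/_3$. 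The condition $0<j$ in the statement is evidently a slip for $0\le j\le k$: the paper's two-line proof (subgroup plus Lagrange) proves only the weaker form, and the weaker form is all that is used downstream --- the computation of $p_i$-parts preceding Theorem~\ref{genfc} needs only that the fixed-point count on $\mathbb{Z}/_{q_i}$ is \emph{some} power of $p_i$, possibly $p_i^0$. Your observation does correctly explain why $j\ge 1$ holds automatically when $p=2$: every unit $r$ is odd, so $r\equiv 1\pmod 2$ and the kernel of multiplication by $r-1$ has order at least $2$. Had you stopped after your first paragraph, stating the conclusion as $0\le j\le k$ and flagging the statement's lower bound as erroneous, your proof would have been correct and essentially identical to the paper's.
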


\begin{proof}
The set of fixed points of a group automorphism forms a subgroup of the group in question. The result then follows by Lagrange's Theorem.
\end{proof}

\subsection{Linear piques on small cyclic groups}\label{SS:linless5}

We build piques on $\mathbb{Z}/_3$ by assigning automorphisms of $(\mathbb{Z}/_3, +,0)$ to $R, L$. Since $R, L$ can be 1 or 2, we have four possibilities for the binary multiplication. Here, we exhibit the permutation character table for $\mathbb{Z}$-linear representations of each linear pique defined on $\mathbb{Z}/_3$.

\begin{table}[htb]
\begin{center}
\begin{tabular}{|c||c|c|c|c|c|}
\hline
$x \cdot y$ 	& $R$ 	& $L$ 	& $\chi(R)$& $\chi(L)$	\\ \hline\hline
$x+y$			&	(1)		&	(1)	 	& 3		& 3			\\ \hline
$x+2y$		&	(1)		&	(1\ 2) 	& 3		& 1			\\ \hline
 $2x+y$		&	(1\ 2)	&	(1)		& 1		& 3			\\ \hline
 $2x+2y$		&	(1\ 2)	&	(1\ 2) 	& 1		& 1			\\ \hline
\end{tabular}
\end{center}
\vskip 2mm
\caption{Permutation characters for linear piques on $\mathbb{Z}/_3$}\label{z3char}
\end{table}

The ordinary characters of $R, L$ are distinct for linear piques of order $3$. By Theorem~\ref{T:issamech}, the four piques are all mutually non-isomorphic. Thus the permutation character completely resolves the isomorphism classes of linear piques of order $3$:

\begin{proposition}
Linear piques defined on $\mathbb{Z}/_3$ are classified completely up to isomorphism by their permutation characters.
\end{proposition}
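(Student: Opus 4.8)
The plan is to reduce the statement to the finite enumeration already recorded in Table~\ref{z3char}. First I would observe that a $\mathbb{Z}$-linear pique structure on $\mathbb{Z}/_3$ is completely determined by the ordered pair $(\rho,\lambda)=(R^\alpha,L^\alpha)$ of automorphisms assigned to $R$ and $L$. Since $\mathrm{Aut}(\mathbb{Z}/_3,+,0)\cong(\mathbb{Z}/_3)^*=\{1,2\}$ has exactly two elements, there are precisely four such piques, namely the four rows of the table. Confirming that this list is exhaustive is what makes the subsequent counting argument legitimate. I would also note the identification of the permutation character with the ordinary character of the complexification: for $g\in\langle R,L\rangle$ the operator $g^{\alpha_\mathbb{C}}$ acts on $\mathbb{C}A$ by the permutation matrix of $g^\alpha$ in the basis $A$, so its trace is the number of fixed points of $g^\alpha$, which is exactly $\chi(g)$.

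For the forward direction (isomorphic piques share a permutation character), I would invoke Lemma~\ref{L:pisomequ}: an $S$-linear pique isomorphism $f$ satisfies $R^\alpha f=fR^\beta$ and $L^\alpha f=fL^\beta$, and since $\alpha,\beta$ are group homomorphisms this propagates to $g^\alpha f=fg^\beta$ for every $g\in\langle R,L\rangle$. Thus $g^\alpha$ and $g^\beta$ are conjugate permutations of the set $\mathbb{Z}/_3$ and so have equally many fixed points, giving $\chi_A=\chi_B$. (Equivalently, this is the content of Theorem~\ref{T:issamech} read through the identification of the previous paragraph.)

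For the converse, I would read off from the $\chi(R)$ and $\chi(L)$ columns of Table~\ref{z3char} that the assignment $(\rho,\lambda)\mapsto(\chi(R),\chi(L))$ is a bijection onto $\{1,3\}^2$: the identity automorphism fixes all three points, while $x\mapsto 2x$ fixes only $0$. Hence the four piques carry pairwise distinct permutation characters, so two piques with the same permutation character are literally the same pique and are trivially isomorphic via the identity map. There is no genuine obstacle in this case; the only point demanding care is the bookkeeping identifying fixed-point counts with character values, so that the invariance supplied by Lemma~\ref{L:pisomequ} (or Theorem~\ref{T:issamech}) may be applied. The argument is nothing more than an exhaustion over a four-element parameter space, and its very brevity is what fails for larger $n$, where $\mathrm{Aut}(\mathbb{Z}/_n)$ is bigger and distinct piques may share a permutation character, as Proposition~\ref{P:Z5pspcni} subsequently demonstrates.
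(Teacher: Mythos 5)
Your proposal is correct and follows essentially the same route as the paper: enumerate the four possible pique structures on $\mathbb{Z}/_3$, observe from Table~\ref{z3char} that their permutation characters (already distinguished by the values $\chi(R)$ and $\chi(L)$) are pairwise distinct, and conclude via Theorem~\ref{T:issamech} (equivalently Lemma~\ref{L:pisomequ}) that the character is a complete isomorphism invariant here. Your version merely spells out the two directions and the trace/fixed-point identification more explicitly than the paper does.
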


In similar vein, one obtains the following:

\begin{proposition}\label{P:z4char}
Linear piques defined on each of $\mathbb{Z}/_2$ and $\mathbb{Z}/_4$ are classified completely up to isomorphism by their permutation characters.
\end{proposition}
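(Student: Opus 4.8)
The plan is to imitate the treatment of $\mathbb{Z}/_3$ and reduce each case to a finite enumeration, exploiting that a $\mathbb{Z}$-linear pique structure on a cyclic group $\mathbb{Z}/_n$ is determined entirely by the ordered pair of automorphisms $(\rho,\lambda)=(R^\alpha,L^\alpha)\in\mathrm{Aut}(\mathbb{Z}/_n,+,0)^2$. Since the module structure on $\mathbb{Z}/_n$ is unique and $|\mathrm{Aut}(\mathbb{Z}/_n,+,0)|=\varphi(n)$, there are exactly $\varphi(n)^2$ such piques. For $\mathbb{Z}/_2$ we have $\varphi(2)=1$, so the only automorphism is the identity and there is a single pique $x\cdot y=x+y$; the classification is then vacuous. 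The substance is the case $\mathbb{Z}/_4$, where $\varphi(4)=2$ yields the four piques $x+y$, $x+3y$, $3x+y$, $3x+3y$.

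First I would record the two automorphisms of $\mathbb{Z}/_4$ as permutations of the underlying set and count their fixed points, exactly as was done for $\mathbb{Z}/_3$. The identity $x\mapsto x$ is the permutation $(1)$ with four fixed points, while the automorphism $x\mapsto 3x$ interchanges $1$ and $3$ and fixes $0$ and $2$, so it is the permutation $(1\ 3)$ with two fixed points; this count is also forced by Lemma~\ref{fixedpoints}, since a nontrivial automorphism of $\mathbb{Z}/_4$ can fix only $2^1=2$ points. Running through the four pairs $(\rho,\lambda)\in\{1,3\}^2$ and reading off $\chi(R)$ and $\chi(L)$ from these fixed-point counts, I would assemble the analogue of Table~\ref{z3char}, obtaining the value-pairs $(\chi(R),\chi(L))$ equal to $(4,4)$, $(4,2)$, $(2,4)$, $(2,2)$ for $x+y$, $x+3y$, $3x+y$, $3x+3y$ respectively.

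Finally, since $R$ and $L$ themselves lie in $\langle R,L\rangle$, these four distinct value-pairs already witness four pairwise distinct permutation characters. By the contrapositive of Theorem~\ref{T:issamech}, piques with distinct characters are non-isomorphic, so the four piques fall into four distinct isomorphism classes, one per character; conversely two piques sharing a permutation character must be the same pique and hence isomorphic. Thus the permutation character is a complete isomorphism invariant on $\mathbb{Z}/_4$, and trivially on $\mathbb{Z}/_2$. I expect no genuine obstacle here: the argument is a finite enumeration, and the only care required is the exhaustiveness of the list of piques and the correct translation of $x\mapsto 3x$ into cycle notation for its fixed-point count—the same bookkeeping that underlies Table~\ref{z3char}.
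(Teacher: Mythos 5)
Your proof is correct and follows essentially the same route as the paper, which gives no explicit proof but says the result is obtained ``in similar vein'' to the $\mathbb{Z}/_3$ case: enumerate the $\varphi(n)^2$ piques, tabulate the fixed-point counts $(\chi(R),\chi(L))$, observe they are pairwise distinct, and invoke Theorem~\ref{T:issamech}. Your fixed-point counts for $x\mapsto 3x$ on $\mathbb{Z}/_4$ and the resulting value-pairs $(4,4)$, $(4,2)$, $(2,4)$, $(2,2)$ are all correct.
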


\subsection{Cyclic groups of prime power order}

Consider a cyclic group $\mathbb{Z}/_{p^k}$, where $p$ is a prime and $k$ is a positive integer.

\begin{lemma}\cite[5.7.12]{Scott}
If $p$ is an odd prime and $k$ is a positive integer, or $p=2$ and $k\in\{1,2\}$, then $\mathrm{Aut}(\mathbb{Z}/_{p^k},+,0)$ is a cyclic group of order $\varphi(p^k)=p^{k-1}(p-1)$.
\end{lemma}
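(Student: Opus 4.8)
The plan is to transport the statement to the group of units $(\mathbb{Z}/_{p^k})^*$, which the isomorphism $(\mathbb{Z}/_n)^*\to\mathrm{Aut}(\mathbb{Z}/_n,+,0)$ recorded above identifies with $\mathrm{Aut}(\mathbb{Z}/_{p^k},+,0)$. The units modulo $p^k$ are exactly the residues coprime to $p^k$, so their number is $\varphi(p^k)=p^{k-1}(p-1)$, giving the asserted order; it remains only to prove cyclicity. The two exceptional cases for $p=2$ are settled by inspection: $(\mathbb{Z}/_2)^*=\{1\}$ is trivial, and $(\mathbb{Z}/_4)^*=\{1,3\}$ has order two, both cyclic.

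First I would dispose of the base case $k=1$ for any prime. Since $\mathbb{Z}/_p$ is a field, $(\mathbb{Z}/_p)^*$ is a finite subgroup of the multiplicative group of a field. For each divisor $d$ of $p-1$, the polynomial $x^d-1$ has at most $d$ roots in $\mathbb{Z}/_p$, so at most $d$ elements have order dividing $d$. Weighing this against the identity $\sum_{d\mid p-1}\varphi(d)=p-1$ forces, for every such $d$, the existence of an element of order exactly $d$; taking $d=p-1$ produces a primitive root, so $(\mathbb{Z}/_p)^*$ is cyclic.

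Next I would lift a primitive root $g$ modulo $p$ to one modulo $p^k$, assuming now that $p$ is odd. The key is to arrange $g^{p-1}\equiv 1+cp\pmod{p^2}$ with $p\nmid c$; should $g$ fail this, replacing it by $g+p$ repairs the defect, since $(g+p)^{p-1}\equiv g^{p-1}-pg^{p-2}\pmod{p^2}$ alters the coefficient of $p$ by the unit $-g^{p-2}$. I would then prove by induction on $m\geq 1$ that $g^{p^{m-1}(p-1)}\equiv 1+c_m p^{m}\pmod{p^{m+1}}$ with $p\nmid c_m$. Writing the left side as $1+p^m a$ with $p\nmid a$ and raising to the $p$-th power, the linear term of the binomial expansion contributes $a p^{m+1}$, while $\binom{p}{2}(p^m a)^2$ has $p$-adic valuation at least $2m+1\geq m+2$ (here $\binom{p}{2}=p\cdot\frac{p-1}{2}$ absorbs an extra factor of $p$ precisely because $p$ is odd), and all higher terms carry valuation at least $3m\geq m+2$. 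Taking $m=k-1$ then yields $g^{p^{k-2}(p-1)}\not\equiv 1\pmod{p^k}$; since the order of $g$ modulo $p^k$ divides $\varphi(p^k)=p^{k-1}(p-1)$ yet is a multiple of its order $p-1$ modulo $p$, it must equal $p^{k-1}(p-1)$, so $g$ generates $(\mathbb{Z}/_{p^k})^*$.

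The main obstacle is this lifting step, and in particular its reliance on the oddness of $p$. When $p=2$ the factor $\binom{2}{2}=1$ no longer supplies an extra power of $2$, so the quadratic term $(2^m a)^2=2^{2m}a^2$ has valuation $2m$, which at $m=1$ coincides with the valuation $m+1$ of the linear term, and the clean induction collapses; this is exactly why $(\mathbb{Z}/_{2^k})^*$ fails to be cyclic once $k\geq 3$, and why the hypothesis confines $p=2$ to $k\leq 2$. I expect the only delicate bookkeeping to lie in tracking the coprimality of the successive coefficients $c_m$ through the induction, which the factorization $1+p^m a$ with $p\nmid a$ is designed to make transparent.
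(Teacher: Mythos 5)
The paper offers no proof of this lemma at all: it is quoted verbatim from Scott \cite[5.7.12]{Scott} as a known fact, so there is nothing internal to compare your argument against. What you have written is the classical primitive-root proof, and it is correct: the identification of $\mathrm{Aut}(\mathbb{Z}/_{p^k},+,0)$ with $(\mathbb{Z}/_{p^k})^*$ matches the convention the paper itself sets up, the counting argument via $\sum_{d\mid p-1}\varphi(d)=p-1$ and the bound of $d$ roots for $x^d-1$ over the field $\mathbb{Z}/_p$ correctly produces a primitive root modulo $p$, and the lifting induction $g^{p^{m-1}(p-1)}\equiv 1+c_mp^m\pmod{p^{m+1}}$ with $p\nmid c_m$ is carried out with the valuation estimates in the right places (in particular, the observation that $\binom{p}{2}$ contributes an extra factor of $p$ exactly because $p$ is odd, and your closing remark correctly locates why the argument, and the conclusion, fail for $p=2$, $k\geq 3$). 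The only point worth flagging is a compression in the base case: from ``at most $d$ elements of order dividing $d$'' one should make explicit that the number of elements of order exactly $d$ is either $0$ or $\varphi(d)$ (since any one such element generates a cyclic group exhausting all roots of $x^d-1$), after which the counting identity forces every summand to be nonzero; as phrased, the inference is stated rather than derived. That is a one-line repair, not a gap.
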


\begin{lemma}\label{fruitcakelemma1}
Let $\mathbb{Z}$-linear representations $\alpha_i:\langle R, L\rangle\to\mathrm{Aut}(\mathbb{Z}/_{p^k})$ have equal respective permutation characters $\chi_i$, for $i=1,2$. Then for each element $g$ of $\langle R, L\rangle$, the automorphisms $g^{\alpha_1}$ and $g^{\alpha_2}$ have the same order.
\end{lemma}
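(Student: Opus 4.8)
The plan is to express the order of the automorphism $g^{\alpha_i}$ purely in terms of the permutation-character values $\chi_i(g^m)$ taken along the powers $g^m$ of $g$, and then to read off equality of the two orders from the hypothesis $\chi_1=\chi_2$. The whole argument reduces the group-theoretic notion of ``order'' to a statement about fixed-point counts, where the common hypothesis can be applied directly.

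First I would record the elementary but decisive observation that a permutation of a finite set $X$ has exactly $|X|$ fixed points if and only if it is the identity permutation. Applied to $X=\mathbb{Z}/_{p^k}$, this says that an automorphism $\sigma$ of $\mathbb{Z}/_{p^k}$ equals the identity exactly when $\sigma$ has $p^k$ fixed points. Next, because each $\alpha_i$ is a group homomorphism, one has $(g^m)^{\alpha_i}=(g^{\alpha_i})^m$ for every positive integer $m$, and by Definition~\ref{D:char} the value $\chi_i(g^m)$ counts the fixed points of this permutation. Combining these two facts, $\chi_i(g^m)=p^k$ holds if and only if $(g^{\alpha_i})^m$ is the identity, that is, if and only if $m$ is a multiple of the order of $g^{\alpha_i}$. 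Consequently the order of $g^{\alpha_i}$ is precisely the least positive integer $m$ for which $\chi_i(g^m)=p^k$.

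Finally, the hypothesis that $\alpha_1$ and $\alpha_2$ have equal permutation characters means that $\chi_1(h)=\chi_2(h)$ for every $h\in\langle R,L\rangle$, and in particular $\chi_1(g^m)=\chi_2(g^m)$ for all $m$. Hence the least $m$ at which the value $p^k$ is attained is the same for $\chi_1$ as for $\chi_2$, so by the characterization of the preceding paragraph the orders of $g^{\alpha_1}$ and $g^{\alpha_2}$ coincide, as required.

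I expect the only genuine subtlety to be the realization that the single value $\chi_i(g)$ does \emph{not} pin down the order of $g^{\alpha_i}$: on $\mathbb{Z}/_7$, for instance, the automorphisms $x\mapsto 2x$ and $x\mapsto 3x$ each fix only $0$ yet have orders $3$ and $6$ respectively. The argument must therefore exploit the character values along the entire cyclic sequence $g,g^2,g^3,\dots$ rather than at $g$ alone. Once one passes to these powers, the equality $\chi_1=\chi_2$ forces the conclusion at once; notably, this route needs only that a permutation fixing every point is the identity, and requires no appeal to the cyclic or prime-power structure of $\mathrm{Aut}(\mathbb{Z}/_{p^k})$ beyond what is recorded in Lemma~\ref{fixedpoints}.
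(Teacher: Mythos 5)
Your proposal is correct and follows essentially the same route as the paper: both arguments rest on the observation that $\chi_i(g^m)=p^k$ exactly when $(g^{\alpha_i})^m$ is the identity, so equality of the characters along the powers of $g$ forces equality of the orders. The paper phrases this with a ``without loss of generality'' comparison of the two orders rather than your explicit characterization of the order as the least $m$ with $\chi_i(g^m)=p^k$, but the substance is identical.
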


\begin{proof}
Suppose, without loss of generality, that $s=|\langle g^{\alpha_1}\rangle|\ge|\langle g^{\alpha_2}\rangle|=t$. Then $\chi_1(g^t)=\chi_2(g^t)=p^k$, so that $g^{\alpha_1t}=g^{t\alpha_1}=1$ and $s\le t$.
\end{proof}

\begin{lemma}\cite[Ex.~2.1]{PCPG}\label{L:PCPGPCPG}
Two permutation representations of a finite cyclic group, with the same character, are isomorphic.
\end{lemma}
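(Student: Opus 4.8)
The plan is to reduce the statement to a Möbius inversion over the divisor lattice of $n=|G|$, using the orbit decomposition of a permutation representation.

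First I would recall the classification of permutation representations of a cyclic group up to isomorphism. Writing $G=\langle c\rangle$ with $|G|=n$, every transitive $G$-set is isomorphic to a coset space $G/H$, and since $G$ is cyclic its subgroups are exactly the $G_d=\langle c^{n/d}\rangle$ of order $d$, one for each divisor $d\mid n$. Because $G/G_d$ has cardinality $n/d$, distinct divisors give transitive $G$-sets of distinct sizes, hence pairwise non-isomorphic. Consequently an arbitrary permutation representation $\Omega$ is determined up to isomorphism by the multiset of its orbit sizes, equivalently by the numbers $a_e$ counting the orbits of $\Omega$ of size $e$, as $e$ ranges over the divisors of $n$. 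Thus it suffices to show that the permutation character determines the tuple $(a_e)_{e\mid n}$.

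Second I would express the character in terms of the $a_e$. The key point, valid because $G$ is abelian, is that an element $g$ fixes either every point of a given orbit or none: indeed $g$ fixes the coset $xH$ iff $x^{-1}gx\in H$ iff $g\in H$. Hence an orbit of size $e$, whose point stabilizer is $G_{n/e}$, is fixed pointwise by $g$ precisely when $g\in G_{n/e}$, i.e. when $\mathrm{ord}(g)\mid n/e$, i.e. when $e\mid n/\mathrm{ord}(g)$. Summing over orbits gives, for each $g$,
\[
\chi(g)=\sum_{e\mid (n/\mathrm{ord}(g))} a_e\,e ,
\]
which shows that $\chi(g)$ depends only on $m:=\mathrm{ord}(g)$. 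Since a cyclic group contains an element of each order $m\mid n$, the character is encoded by the function $\psi(m):=\sum_{e\mid(n/m)}a_e\,e$ on divisors.

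Third, the crux, I would invert this relation. Setting $\Psi(M):=\psi(n/M)=\sum_{e\mid M}a_e\,e$, the right-hand side is the summatory function, over the divisor lattice, of the arithmetic function $e\mapsto a_e\,e$. Möbius inversion over divisors then yields $a_M\,M=\sum_{e\mid M}\mu(M/e)\,\Psi(e)$, so that each $a_M$ is recovered as an explicit integral linear combination of values of $\chi$. Therefore two permutation representations with equal characters have equal orbit-size multiplicities $(a_e)$, and by the first step they are isomorphic as $G$-sets.

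I expect the third step to be the main, though routine, technical point: one must set up the divisor indexing carefully so that the linear system relating the values of $\chi$ to the $a_e$ is genuinely invertible. Conceptually this is the invertibility of the table of marks of the cyclic group, since the matrix $T_{m,e}$ equal to $e$ when $e\mid n/m$ and to $0$ otherwise is triangular with nonzero diagonal once the divisors are ordered by divisibility; for cyclic groups this invertibility is precisely the classical Möbius inversion above. An alternative, shorter route would invoke the general fact that the table of marks of any finite group is invertible, noting that in a cyclic group every subgroup is generated by a single element, so the permutation character, which records the fixed-point counts of all elements, already determines the mark vector, which records the fixed-point counts of all subgroups, whence the orbit decomposition is determined.
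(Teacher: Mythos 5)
Your proof is correct. The paper itself gives no argument for this lemma --- it is quoted as Exercise 2.1 of Cameron's \emph{Permutation Groups} --- so there is nothing internal to compare against; your write-up is the standard solution to that exercise. The two essential uses of cyclicity are exactly where they should be: a cyclic group has a unique subgroup of each order dividing $n$ (so transitive $G$-sets are determined by their cardinality, and membership of $g$ in the stabilizer of an orbit is governed solely by $\mathrm{ord}(g)$), and it contains an element of every order $m\mid n$ (so the character really does determine the full divisor-indexed data $\psi(m)$ needed for the M\"obius inversion). The inversion step recovering $a_M\,M=\sum_{e\mid M}\mu(M/e)\,\Psi(e)$ is routine and correctly set up, and your closing remark identifying this with the invertibility of the table of marks is an accurate, more general gloss.
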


\begin{proposition}\label{fruitcake}
Let $p$ be a prime number, and let $k$ be a positive integer. Suppose that two linear pique structures defined on $\mathbb{Z}/p^k$ have the same permutation character. Suppose that one of the three following hypotheses applies:
\begin{enumerate}
\item[$(\mathrm{a})$]
Let $p$ be an odd prime;
\item[$(\mathrm{b})$]
Let $p=2$ and $k\in\{1,2\}$;
\item[$(\mathrm{c})$]
Let $p=2$ and $k>2$, but assume that the inner multiplication groups of the two piques are cyclic.
\end{enumerate}
Then the corresponding representations are permutationally similar.
\end{proposition}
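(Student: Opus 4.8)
The plan is to produce the required permutation of $\mathbb{Z}/_{p^k}$ as a simultaneous conjugation that comes from an abstract isomorphism of the two inner multiplication groups, and then to use Lemma~\ref{L:PCPGPCPG} to promote that abstract isomorphism to a genuine permutation of the underlying set. The first step unifies the three cases: in each of them the inner multiplication group $G_i=\langle R^{\alpha_i},L^{\alpha_i}\rangle$ is cyclic, hence abelian. Indeed, in cases (a) and (b) the full automorphism group $\mathrm{Aut}(\mathbb{Z}/_{p^k},+,0)$ is cyclic by \cite[5.7.12]{Scott}, so every subgroup is cyclic, while in case (c) cyclicity is assumed outright. This reduction is the only place the three hypotheses are used; from here on the argument treats them uniformly.

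Since $G_i$ is abelian, the afforded representation $\alpha_i$ factors through the abelianization $\mathbb{Z}^2$ of $\langle R,L\rangle$, yielding a surjection $\mathbb{Z}^2\twoheadrightarrow G_i$ whose kernel $K_i$ records exactly the relations satisfied by the generating pair $(R^{\alpha_i},L^{\alpha_i})$. The central step is to prove $K_1=K_2$. For a pair $(a,b)\in\mathbb{Z}^2$, writing $g=R^aL^b$, one has $(a,b)\in K_i$ if and only if $g^{\alpha_i}=1$, equivalently if and only if $g^{\alpha_i}$ has order $1$; by Lemma~\ref{fruitcakelemma1} the orders of $g^{\alpha_1}$ and $g^{\alpha_2}$ coincide, so $(a,b)\in K_1\iff(a,b)\in K_2$. (Concretely, $g^{\alpha_i}=1$ is detected by $\chi_i(g)=p^k$, and the two piques share a permutation character.) Consequently $G_1\cong\mathbb{Z}^2/K_1=\mathbb{Z}^2/K_2\cong G_2$, and the resulting isomorphism $\theta\colon G_1\to G_2$ sends $R^{\alpha_1}\mapsto R^{\alpha_2}$ and $L^{\alpha_1}\mapsto L^{\alpha_2}$.

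It remains to realize $\theta$ by a permutation of $A=\mathbb{Z}/_{p^k}$. Here I would compare two permutation representations of the cyclic group $G_1$: the tautological action $c\mapsto c$, and the twisted action $c\mapsto\theta(c)$ obtained by letting $G_2$ act through $\theta$. Writing a typical element as $c=(R^{\alpha_1})^a(L^{\alpha_1})^b$, the first has character $\mathrm{fix}(c)=\chi_1(R^aL^b)$ at $c$, whereas the second has character $\mathrm{fix}(\theta(c))=\chi_2(R^aL^b)$; since $\chi_1=\chi_2$ these two characters agree. As $G_1$ is cyclic, Lemma~\ref{L:PCPGPCPG} supplies a bijection $\pi$ of $A$ intertwining the two actions, so that $c\,\pi=\pi\,\theta(c)$ for every $c\in G_1$. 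Specializing to $c=R^{\alpha_1}$ and to $c=L^{\alpha_1}$ gives $R^{\alpha_1}\pi=\pi R^{\alpha_2}$ and $L^{\alpha_1}\pi=\pi L^{\alpha_2}$, which is precisely the commuting of \eqref{E:persimdg}. Hence the two piques are permutationally similar via $\pi$.

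The genuine content sits in the middle paragraph. It is not the matching of the individual orders of $R^{\alpha_1},R^{\alpha_2}$ (and of $L^{\alpha_1},L^{\alpha_2}$) that is hard — that is immediate — but rather that the two generating pairs satisfy exactly the same relation lattice, so that the naive generator-matching assignment is a well-defined isomorphism. This is why equality of the full permutation character (its values on all mixed words $R^aL^b$, not merely on powers of $R$ and of $L$ separately), as encapsulated by Lemma~\ref{fruitcakelemma1}, is indispensable, and why the hypothesis forcing $G_i$ to be abelian does the real work. Once $\theta$ is secured, cyclicity enters only through Lemma~\ref{L:PCPGPCPG}, to pass from an abstract group isomorphism to an honest permutational intertwining.
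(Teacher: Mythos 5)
Your proof is correct and follows essentially the same route as the paper's: exploit cyclicity of the inner multiplication groups in all three cases, use Lemma~\ref{fruitcakelemma1} to match the two groups up generator-by-generator, and invoke Lemma~\ref{L:PCPGPCPG} to convert the abstract isomorphism into a permutation of $\mathbb{Z}/_{p^k}$. The only difference is cosmetic but welcome: where the paper picks a single generator $g$ of $\langle R,L\rangle^{\alpha_1}$ and leaves implicit that the induced correspondence sends $R^{\alpha_1},L^{\alpha_1}$ to $R^{\alpha_2},L^{\alpha_2}$, your kernel computation $K_1=K_2$ in $\mathbb{Z}^2$ makes that well-definedness explicit.
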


\begin{proof}
Suppose that the piques correspond to respective representations $\alpha_i:\langle R, L\rangle\to\mathrm{Aut}(\mathbb{Z}/_{p^k})$, for $i=1,2$. Suppose, without loss of generality, that $|\langle R,L\rangle^{\alpha_1}|\ge|\langle R,L\rangle^{\alpha_2}|$. Let $g$ be an element of $\langle R, L\rangle$ whose image under $\alpha_1$ generates $\langle R,L\rangle^{\alpha_1}$, so the order of $g^{\alpha_1}$ is $|\langle R,L\rangle^{\alpha_1}|$. Then by Lemma~\ref{fruitcakelemma1}, the order of $g^{\alpha_2}$ is $|\langle R,L\rangle^{\alpha_1}|$. Thus $|\langle R,L\rangle^{\alpha_1}|=|\langle R,L\rangle^{\alpha_2}|$, and $g^{\alpha_2}$ generates $\langle R,L\rangle^{\alpha_2}$. Consider the finite cyclic group $G\cong\langle g^{\alpha_1}\rangle\cong\langle g^{\alpha_2}\rangle$, with permutation representations $\gamma_i\colon G\to\mathrm{Aut}(\mathbb{Z}/_{p^k});g^{\alpha_it}\mapsto g^{t\alpha_i}$ for $i=1,2$. The respective permutation characters are equal, so by Lemma~\ref{L:PCPGPCPG}, the two permutation representations $\gamma_i$ of $G$ are isomorphic. It follows that the representations $\alpha_1,\alpha_2$ are permutationally similar.
\end{proof}

\subsection{Cyclic groups of order not divisible by 8}

For any positive integer $m$, consider a factorization
$$
m=\prod_{i=1}^sp_i^{k_i}
$$
with distinct primes $p_1<\ldots<p_s$ for $1\le i\le s$. Write $q_i=p_i^{k_i}$ for $1\le i\le s$. We refer to $q_i$ as the $p_i$-\emph{part} of $m$. Now for $1<n\in\mathbb{Z}$, fix the notation $n=\prod_{i=1}^sp_i^{k_i}$, with distinct primes $p_1<\ldots<p_s$ and positive exponents $k_1,\dots,k_s$, for $1\le i\le s$.

\begin{proposition}\label{CRT}\cite[5.7.3]{Scott}
Let $A$ be an abelian group of order $n$. For $1\le i\le s$, let $A_i$ be the Sylow $p_i$-subgroup of $A$. Then $\mathrm{Aut}(A)\cong \prod_{i=1}^s\mathrm{Aut}(A_i)$.
\end{proposition}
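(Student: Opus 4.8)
The plan is to exploit the primary decomposition of the finite abelian group $A$ together with the observation that each Sylow subgroup is characteristic. First I would recall that, by the structure theorem for finite abelian groups, $A$ decomposes as an internal direct sum $A=A_1\oplus\cdots\oplus A_s$ of its Sylow subgroups, where $A_i$ is precisely the set of elements of $A$ whose order is a power of $p_i$. Because any automorphism of $A$ preserves the order of every element, this description shows that each $A_i$ is a \emph{characteristic} subgroup: for every $\phi\in\mathrm{Aut}(A)$ one has $A_i^\phi=A_i$.

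Next I would use the characteristic property to build the candidate isomorphism. Since each $\phi\in\mathrm{Aut}(A)$ restricts to an automorphism $\phi|_{A_i}$ of each summand $A_i$, one obtains a map
$$
\Phi\colon\mathrm{Aut}(A)\to\prod_{i=1}^s\mathrm{Aut}(A_i);\quad \phi\mapsto(\phi|_{A_1},\dots,\phi|_{A_s}),
$$
which is a group homomorphism because restriction respects composition. To check injectivity, I would note that every element of $A$ is a sum of components lying in the summands $A_i$, so an automorphism of $A$ is determined by its restrictions to the $A_i$; hence $\Phi(\phi)=\Phi(\psi)$ forces $\phi=\psi$. For surjectivity, given a tuple $(\psi_1,\dots,\psi_s)$ with $\psi_i\in\mathrm{Aut}(A_i)$, I would define $\phi$ to act as $\psi_i$ on each summand and extend additively across the direct sum; the direct-sum structure guarantees that $\phi$ is a well-defined automorphism of $A$ with $\Phi(\phi)=(\psi_1,\dots,\psi_s)$.

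The only substantive point, and the one I would pin down most carefully, is the characteristic property of the Sylow subgroups. Because $A$ is abelian, each Sylow $p_i$-subgroup is unique and coincides with the full set of $p_i$-power-order elements, which makes invariance under every automorphism immediate; once this is in hand, the verification that $\Phi$ and its componentwise inverse are mutually inverse homomorphisms is routine bookkeeping with the decomposition $A=A_1\oplus\cdots\oplus A_s$.
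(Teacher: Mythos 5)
Your proof is correct and complete; the paper itself offers no argument for this proposition, simply citing it to \cite[5.7.3]{Scott}. The route you take --- primary decomposition into Sylow subgroups, each of which is characteristic because it consists exactly of the elements of $p_i$-power order, followed by the restriction map $\Phi$ and its componentwise inverse --- is the standard proof of the cited result, and every step you flag (well-definedness of the extension across the direct sum, injectivity via determination on components) checks out.
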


The Chinese Remainder Theorem gives a direct sum decomposition
\begin{equation}\label{E:Chinese}
c\colon\mathbb{Z}/_n\to\bigoplus_{i=1}^s\mathbb{Z}/_{q_i};
x\mapsto(x_1,\dots,x_s) \,.
\end{equation}
In turn, application of Proposition~\ref{CRT} to the cyclic group $\mathbb{Z}/_n$ yields the isomorphism
\begin{equation}\label{E:Scottie}
a\colon\mathrm{Aut}(\mathbb{Z}/_n)\to\prod_{i=1}^s\mathrm{Aut}(\mathbb{Z}/_{q_i});
\theta\mapsto(\theta_1,\dots,\theta_s) \,.
\end{equation}
For an automorphism $\theta$ of $\mathbb{Z}/_n$, let $\pi(\theta)$ be the number of fixed points of $\theta$. For $1\le i\le s$, let $\pi_i(\theta_i)$ be the number of fixed points of $\theta_i$ on $\mathbb{Z}/_{q_i}$. By virtue of the set isomorphism $c\colon\mathbb{Z}/_n\to\prod_{i=1}^s\mathbb{Z}/_{q_i}$, one has
$$
\pi(\theta)=\prod_{i=1}^s\pi_i(\theta_i) \, .
$$
Then by Lemma~\ref{fixedpoints}, $\pi_i(\theta_i)$ is the $p_i$-part of $\pi(\theta)$.

Now restrict the fixed integer $n$ by requiring that it not be divisible by $8$. In our notation, this means that $k_1<3$ if $p_1=2$. As a consequence, the automorphism groups $\mathrm{Aut}(\mathbb{Z}/_{q_i})$ are all cyclic.

\begin{theorem}\label{genfc}
Let $A$ be a finite cyclic group whose order is not divisible by $8$. Then if two $\mathbb{Z}$-linear piques on $A$ have the same permutation character, they are permutationally similar.
\end{theorem}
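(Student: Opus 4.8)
The plan is to reduce the problem to the prime-power case already settled in Proposition~\ref{fruitcake}, using the Chinese Remainder decompositions \eqref{E:Chinese} and \eqref{E:Scottie}. Retaining the notation $n=\prod_{i=1}^s p_i^{k_i}$ and $q_i=p_i^{k_i}$, suppose the two piques afford representations $\alpha_1,\alpha_2\colon\langle R,L\rangle\to\mathrm{Aut}(\mathbb{Z}/_n)$. Composing each $\alpha_j$ with the projection of \eqref{E:Scottie} onto the $i$-th factor yields component representations $\alpha_j^{(i)}\colon\langle R,L\rangle\to\mathrm{Aut}(\mathbb{Z}/_{q_i})$, each of which specifies a $\mathbb{Z}$-linear pique structure on the cyclic prime-power group $\mathbb{Z}/_{q_i}$ via $x\cdot y=x^{R^{\alpha_j^{(i)}}}+y^{L^{\alpha_j^{(i)}}}$. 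Because $n$ is not divisible by $8$, the $2$-part exponent is at most $2$, so for each $i$ either hypothesis (a) of Proposition~\ref{fruitcake} applies (when $p_i$ is odd) or hypothesis (b) applies (when $p_i=2$).

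The crux is to verify that, for each $i$, the two component representations $\alpha_1^{(i)}$ and $\alpha_2^{(i)}$ share a permutation character, so that Proposition~\ref{fruitcake} may indeed be invoked. Fix $g\in\langle R,L\rangle$ and write $\theta_j=g^{\alpha_j}$ with $i$-th component $\theta_{j,i}=g^{\alpha_j^{(i)}}$. Multiplicativity of fixed-point counts under the set isomorphism $c$ gives $\pi(\theta_j)=\prod_{i=1}^s\pi_i(\theta_{j,i})$, where each factor $\pi_i(\theta_{j,i})$ is a power of the prime $p_i$ by Lemma~\ref{fixedpoints}. The hypothesis that the piques have the same permutation character gives $\pi(\theta_1)=\pi(\theta_2)$, that is, $\prod_i\pi_i(\theta_{1,i})=\prod_i\pi_i(\theta_{2,i})$ as a product of powers of distinct primes. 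Unique factorization then forces $\pi_i(\theta_{1,i})=\pi_i(\theta_{2,i})$ for every $i$. As $g$ was arbitrary, $\alpha_1^{(i)}$ and $\alpha_2^{(i)}$ have equal permutation characters on $\mathbb{Z}/_{q_i}$.

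Applying Proposition~\ref{fruitcake} to each $i$ produces a permutation $\pi_i$ of $\mathbb{Z}/_{q_i}$ that simultaneously conjugates $R^{\alpha_1^{(i)}}$ to $R^{\alpha_2^{(i)}}$ and $L^{\alpha_1^{(i)}}$ to $L^{\alpha_2^{(i)}}$. I would then assemble these into the coordinatewise product permutation $\prod_i\pi_i$ of $\bigoplus_i\mathbb{Z}/_{q_i}$ and transport it through the Chinese Remainder bijection $c$ to obtain a permutation $\pi$ of $\mathbb{Z}/_n$. Since $R^{\alpha_j}$ and $L^{\alpha_j}$ correspond under $c$ to the coordinatewise products of their components, conjugation by $\prod_i\pi_i$ is computed factor by factor and sends $R^{\alpha_1}$ to $R^{\alpha_2}$ and $L^{\alpha_1}$ to $L^{\alpha_2}$; hence $\pi$ witnesses permutational similarity, as required by Definition~\ref{D:permsmly}.

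I expect the main obstacle to lie in the bookkeeping of the key step: confirming that the projected maps $\alpha_j^{(i)}$ are genuinely the representations afforded by honest linear piques on $\mathbb{Z}/_{q_i}$, so that Proposition~\ref{fruitcake} applies, and that the separate $\pi_i$, which need \emph{not} be group automorphisms, nonetheless combine coordinatewise into a single permutation whose conjugation action is compatible with the CRT identification of $\mathrm{Aut}(\mathbb{Z}/_n)$ with $\prod_i\mathrm{Aut}(\mathbb{Z}/_{q_i})$. The multiplicativity-plus-unique-factorization argument separating the component characters is the conceptual heart; everything downstream amounts to checking that the purely permutational, rather than linear, nature of the $\pi_i$ causes no difficulty when they are assembled.
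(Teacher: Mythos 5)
Your proposal is correct and follows essentially the same route as the paper: decompose via the Chinese Remainder Theorem, use Lemma~\ref{fixedpoints} to identify the component fixed-point counts as the $p_i$-parts of the permutation character (so equal characters force equal component characters), invoke Proposition~\ref{fruitcake} on each prime-power factor, and reassemble the resulting permutations coordinatewise. The only cosmetic difference is that you re-derive the ``$p_i$-part'' separation by unique factorization inside the proof, whereas the paper establishes it in the discussion preceding the theorem.
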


\begin{proof}
By transport of structure, it suffices to examine the case where $A=\mathbb{Z}/_n$, with notation as above. Consider the representations $\alpha,\alpha'$ of $\langle R,L\rangle$ corresponding to the two pique structures. Suppose that their respective permutation characters are $\chi$ and $\chi'$. By the hypothesis, these characters coincide. In particular, for each element $g$ of $\langle R,L\rangle$, and for each $1\le i\le s$, the respective $p_i$-parts of $\chi_i(g)$ and $\chi_i'(g)$ of $\chi(g)$ and $\chi'(g)$ coincide.

For each $1\le i\le s$, and for each element $g$ of $\langle R,L\rangle$, define
$
g^{\alpha_i}=(g^{\alpha})_i
$
and
$
g^{\alpha_i'}=(g^{\alpha'})_i
$
using the notation embodied in \eqref{E:Scottie}. One obtains respective representations $\alpha_i$ and $\alpha_i'$ of $\langle R,L\rangle$ on $\mathbb{Z}/_{q_i}$, with equal permutation characters $\chi_i(g)$ and $\chi_i'(g)$. By Proposition~\ref{fruitcake}, it follows that these representations are permutationally similar, say by permutations $b_i\colon\mathbb{Z}/_{q_i}\to\mathbb{Z}/_{q_i}$. Then the permutation $b$ of $\mathbb{Z}/_n$, defined by setting $xb=(x_1b_1,\dots,x_sb_s)c^{-1}$ in the notation of \eqref{E:Chinese}, yields the desired permutation similarity between $\alpha$ and $\alpha'$.
\end{proof}

\subsection{Linear piques on $\mathbb{Z}/_5$}

Now we consider an explicit example of the preceding work using linear piques defined on $\mathbb{Z}/_5$. Automorphisms of $\mathbb{Z}/_5$ are given by multiplication by non-zero elements. The following table lists the permutations for each element in $(\mathbb{Z}/_5)^*$.

\begin{center}
\begin{tabular}{|c|c|c|c|c|}
\hline
Automorphism	& 1	& 2			& 3			&4	\\ \hline
Permutation	& (1)& (1\;2\;4\;3)	& (1\;3\;4\;2)	& (1\;4)(2\;3) \\ \hline
\end{tabular}
\end{center}

Let $x\circ_1 y=x+2y$ and $x\circ_2y=x+3y$. Since the identity $(xx)x=(yy)y$ holds in $(\mathbb{Z}/_5, \circ_1)$, but not in $(\mathbb{Z}/_5, \circ_2)$, the respective piques are certainly not isomorphic, even as magmas under the quasigroup multiplication.

On the other hand, the representations of $(\mathbb{Z}/_5, \circ_1)$ and $(\mathbb{Z}/_5, \circ_2)$ have the same permutation character. In each case, $R$ maps to the identity, and $L$ maps to a $4$-cycle. Let $\{e_i\mid 0\le i<5\}$ be the standard basis for $\mathbb{C}^5$. Consider the permutation matrix
\begin{equation*}
P_{(2\ 3)}=
\begin{bmatrix}
1 & 0 & 0 & 0 & 0 \\
0 & 1 & 0 & 0 & 0 \\
0 & 0 & 0 & 1 & 0 \\
0 & 0 & 1 & 0 & 0 \\
0 & 0 & 0 & 0 & 1
\end{bmatrix}
\end{equation*}
of the permutation $(2\ 3)$. Define the linear transformation
$$
\tau:\mathbb{C}^5\to \mathbb{C}^5; e_i\mapsto e_iP_{(2\;3)} \, .
$$
Since the 4-cycles (1\ 2\ 4\ 3) and (1\ 3\ 4\ 2) are conjugated by (2\ 3), the ordinary representations for the non-isomorphic $\mathbb{Z}$-linear piques $(\mathbb{Z}/_5,\circ_1)$ and $(\mathbb{Z}/_5,\circ_2)$ are permutationally similar. We may summarize as follows.

\begin{proposition}\label{P:Z5pspcni}
There is a pair of $\mathbb{Z}$-linear piques on $\mathbb{Z}/_{5}$ which have the same permutation character, and are permutationally similar, but which are not isomorphic.
\end{proposition}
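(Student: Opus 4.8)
The plan is to prove the proposition by explicit construction, exhibiting the two piques and verifying in turn the three claimed properties: equal permutation character, permutational similarity, and non-isomorphism. I would take $x\circ_1 y = x+2y$ and $x\circ_2 y = x+3y$ on $\mathbb{Z}/_5$, noting that in each the right multiplication $R$ by the idempotent $0$ is the identity automorphism, while the left multiplication $L$ by $0$ is multiplication by $2$ (respectively $3$). Since $2$ and $3$ are both primitive roots modulo $5$, each $L$ is a $4$-cycle: explicitly $L^{\alpha_1}=(1\;2\;4\;3)$ and $L^{\alpha_2}=(1\;3\;4\;2)$ in the notation of Definition~\ref{D:char}. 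This initial setup fixes all the data needed for the three verifications.

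For non-isomorphism, I would search for a quasigroup identity that separates the two piques. A convenient choice is $(xx)x=(yy)y$: computing in $(\mathbb{Z}/_5,\circ_1)$ gives $(x\circ_1 x)\circ_1 x = 3x\circ_1 x = 5x = 0$ for every $x$, so the identity holds; whereas in $(\mathbb{Z}/_5,\circ_2)$ one finds $(x\circ_2 x)\circ_2 x = 4x\circ_2 x = 7x = 2x$, which is not constant in $x$, so the identity fails. As any pique isomorphism---indeed any magma isomorphism---preserves such identities, the two structures cannot be isomorphic. Finding this separating identity is the one genuinely creative step in the argument.

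For equality of permutation characters, I would observe that because $R$ maps to the identity under both representations, each element $g$ of $\langle R,L\rangle$ is sent by $\alpha_i$ to multiplication by $2^k$ (respectively $3^k$), where $k$ is the total $L$-exponent of $g$. Multiplication by a unit $r$ of $\mathbb{Z}/_5$ fixes all $5$ points when $r=1$ and only the point $0$ otherwise; and since both $2$ and $3$ have order $4$, the image $g^{\alpha_i}$ is the identity precisely when $k\equiv 0\pmod 4$, the same condition in both cases. Hence $\chi_1(g)=\chi_2(g)$ for every $g$, so the two piques share a permutation character. This is the step requiring the most care, since one must confirm agreement across the entire infinite group $\langle R,L\rangle$ rather than merely on the generators; reducing the fixed-point count to the single invariant ``$L$-exponent modulo $4$'' is what makes this tractable, and I would regard it as the main, if mild, obstacle.

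Finally, for permutational similarity I would exhibit an explicit witness. Since both $R$-images are the identity, the only requirement is a permutation conjugating $L^{\alpha_1}$ to $L^{\alpha_2}$; noting that $(1\;3\;4\;2)=(1\;2\;4\;3)^{-1}$, the transposition $\pi=(2\;3)$ of the two non-extreme symbols conjugates the $4$-cycle to its inverse, realizing the permutational similarity of Definition~\ref{D:permsmly}. Alternatively, once equality of permutation characters is in hand, permutational similarity follows immediately from Theorem~\ref{genfc}, since $5$ is not divisible by $8$; but the explicit $\pi$ is more transparent. I would close by remarking that $\pi=(2\;3)$ is not an automorphism of $(\mathbb{Z}/_5,+,0)$, which underscores that this similarity is genuinely weaker than isomorphism and illustrates the intermediate status of the relation.
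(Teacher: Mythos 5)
Your proposal is correct and matches the paper's own argument essentially step for step: the same pair of piques $x+2y$ and $x+3y$, the same separating identity $(xx)x=(yy)y$ for non-isomorphism, and the same conjugating transposition $(2\;3)$ for permutational similarity. The only difference is that you spell out the fixed-point count over all of $\langle R,L\rangle$ and the verification of the identity in more detail than the paper does, which is harmless.
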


\section{Linear piques defined on $\mathbb{Z}/_{2^k}$}\label{S:power2piques}

As recorded in Proposition~\ref{P:z4char}, linear piques defined on $\mathbb{Z}/_2$ and $\mathbb{Z}/_4$ are classified up to isomorphism by their permutation characters. In this section, we examine the classification of $\mathbb{Z}$-linear pique structures on $\mathbb{Z}/_{2^k}$ for $k\geq 3$. For each positive integer $k$,
the group of units of the monoid of integers modulo $2^k$ consists of the non-zero odd residues.

\subsection{The case of $\mathbb{Z}/_8$}
Let us consider linear piques defined on $\mathbb{Z}/_8$. To construct a $\mathbb{Z}$-linear pique on $\mathbb{Z}/_8$, we must assign $\rho,\lambda$ the values 1, 3, 5, or 7. The following table lists the permutations for each element in $(\mathbb{Z}/_8)^*$.

\begin{center}
\begin{tabular}{|c|c|c|c|c|}
\hline
Automorphism	& 1	& 3				& 5			&7	\\ \hline
Permutation	& (1)& (1\;3)(2\;6)(5\;7)& (1\;5)(3\;7)	& (1\;7)(2\;6)(3\;5) \\ \hline
\end{tabular}
\end{center}

If two linear piques have the same permutation characters, then the permutations associated with $R, L$ must have the same cycle type. The only possibilities for isomorphic ordinary representations are listed in the following table. We omit opposite quasigroups.

\begin{table}[hbt]
\begin{center}
\begin{tabular}{|c|c|c|c|c|c|c|}
\hline
$x\cdot y$	& $R$	& $L$		& $\chi(R)$& $\chi(L)$ & $\chi(L^2)$	 & $\chi(RL)$ \\ \hline
$x+3y$		& (1)			& (1\;3)(2\;6)(5\;7)	& 8		& 2		& 8			& 2			 \\ \hline
$x+7y$		&(1)			& (1\;7)(2\;6)(3\;5)	& 8		& 2		& 8			& 2			 \\ \hline
$5x+3y$		&(1\;5)(3\;7)	& (1\;3)(2\;6)(5\;7)	& 4		& 2		& 8			& 2			 \\ \hline
$5x+7y$		&(1\;5)(3\;7)	& (1\;7)(2\;6)(3\;5)	& 4		& 2		& 8			& 2			 \\ \hline
\end{tabular}
\end{center}
\vskip 2mm
\caption{Partial character table for linear piques on $\mathbb{Z}/_8$}\label{z8char}
\end{table}

The permutations for $3$ and $7$ are conjugated by $(3\ 7)$, while those for $1$ and $5$ are fixed under conjugation by $(3\ 7)$. Thus for equivalent complexified ordinary representations, the permutation matrix $P_{(3\;7)}$ serves as a permutation intertwining. We may summarize as follows.

\begin{proposition}\label{P:Z8pspcni}
If a pair of $\mathbb{Z}$-linear piques on $\mathbb{Z}/_{8}$ have the same permutation character, then they are permutationally similar.
\end{proposition}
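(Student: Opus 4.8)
The plan is to reduce the statement to a finite check on the automorphism group $\mathrm{Aut}(\mathbb{Z}/_8)\cong(\mathbb{Z}/_8)^*$, exploiting the fact — visible in the table above — that this group is the Klein four-group $\{1,3,5,7\}$ of exponent $2$. A $\mathbb{Z}$-linear pique on $\mathbb{Z}/_8$ is determined by its pair $(\rho,\lambda)\in(\mathbb{Z}/_8)^*\times(\mathbb{Z}/_8)^*$ with $R\mapsto\rho$ and $L\mapsto\lambda$, so the inner multiplication group $\langle R,L\rangle^\alpha=\langle\rho,\lambda\rangle$ is elementary abelian of exponent $2$. Consequently every word $g$ in $\langle R,L\rangle$ satisfies $g^\alpha\in\{1,\rho,\lambda,\rho\lambda\}$, and the permutation character is completely determined by the three fixed-point numbers of $\rho$, $\lambda$ and $\rho\lambda$ (that of $1$ being $8$). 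I would record these numbers from the table as a function $f$ on $(\mathbb{Z}/_8)^*$ with $f(1)=8$, $f(5)=4$, $f(3)=f(7)=2$. Two piques $(\rho_1,\lambda_1)$ and $(\rho_2,\lambda_2)$ then share a permutation character precisely when $f(\rho_1)=f(\rho_2)$, $f(\lambda_1)=f(\lambda_2)$ and $f(\rho_1\lambda_1)=f(\rho_2\lambda_2)$.

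Next I would isolate the transposition $(3\;7)$ of the underlying set $\mathbb{Z}/_8$ and verify, using the explicit permutations in the table, that conjugation by it induces the map $\sigma$ on $(\mathbb{Z}/_8)^*$ that fixes $1$ and $5$ and interchanges the two involutions $3$ and $7$. This $\sigma$ is a group automorphism of the Klein four-group $(\mathbb{Z}/_8)^*$, and since it permutes the elements within each level set of $f$ — fixing the singletons $\{1\}$ and $\{5\}$ and swapping the pair $\{3,7\}$ — it preserves $f$, that is, $f\circ\sigma=f$.

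The heart of the argument is to show that the three fixed-point conditions force $(\rho_2,\lambda_2)$ to equal either $(\rho_1,\lambda_1)$ or $(\sigma\rho_1,\sigma\lambda_1)$. The first two conditions give $\rho_2\in\{\rho_1,\sigma\rho_1\}$ and $\lambda_2\in\{\lambda_1,\sigma\lambda_1\}$, the two options for a coordinate coinciding exactly when that coordinate lies in $\{1,5\}$. Since $\sigma$ is a homomorphism preserving $f$, the full-swap pair $(\sigma\rho_1,\sigma\lambda_1)$ automatically satisfies the product condition, as $\sigma\rho_1\cdot\sigma\lambda_1=\sigma(\rho_1\lambda_1)$ and $f(\sigma(\rho_1\lambda_1))=f(\rho_1\lambda_1)$. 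What remains is to rule out a genuinely mixed assignment, in which one coordinate is swapped within $\{3,7\}$ and the other kept; such an assignment can arise only when both $\rho_1$ and $\lambda_1$ lie in $\{3,7\}$. In that situation $\rho_1\lambda_1\in\{1,5\}$, and replacing exactly one factor by the other element of $\{3,7\}$ carries the product to the opposite element of $\{1,5\}$, changing its fixed-point number between $8$ and $4$. This violates the third condition and excludes the mixed assignments, leaving only the two claimed possibilities.

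Having narrowed the possibilities to these two pairs, I would finish by taking $\pi=\mathrm{id}$ in the first case and $\pi=(3\;7)$ in the second. Because conjugation by $(3\;7)$ realizes $\sigma$ simultaneously on all of $(\mathbb{Z}/_8)^*$, it carries $R^{\alpha_1}=\rho_1$ to $\sigma\rho_1=\rho_2=R^{\alpha_2}$ and $L^{\alpha_1}=\lambda_1$ to $\sigma\lambda_1=\lambda_2=L^{\alpha_2}$, so the diagram \eqref{E:persimdg} commutes and the two piques are permutationally similar in the sense of Definition~\ref{D:permsmly}. The main obstacle — and the reason Proposition~\ref{fruitcake} does not cover this case — is precisely that $\mathrm{Aut}(\mathbb{Z}/_8)$ is the non-cyclic Klein four-group, so Lemma~\ref{L:PCPGPCPG} is unavailable and one cannot route the argument through a cyclic inner multiplication group. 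The delicate point is therefore the third, product condition: it is what both excludes the mixed assignments and guarantees that the two coordinates requiring a nontrivial swap can always be handled by the single transposition $(3\;7)$.
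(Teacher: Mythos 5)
Your proof is correct and takes essentially the same route as the paper: both reduce the claim to a finite check on $(\mathbb{Z}/_8)^*$ and realize every character-preserving coincidence of pique structures by conjugation with the single transposition $(3\;7)$. Your version is in fact slightly more complete, since your analysis of the product condition $f(\rho_1\lambda_1)=f(\rho_2\lambda_2)$ explicitly disposes of the pairs with both $\rho$ and $\lambda$ in $\{3,7\}$, which the paper's Table~\ref{z8char} leaves implicit.
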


\subsection{Computing permutations for automorphisms of $\mathbb{Z}/_{2^k}$}

For consideration of linear piques defined on $\mathbb{Z}/_{2^k}$ for $k\geq 4$, it becomes unwieldy to determine the permutations implemented by each automorphism  of $\mathbb{Z}/_{2^k}$ by hand. Instead, we use a program to list the permutations, and to enumerate their fixed points. We illustrate the process by computing the permutations for automorphisms of $\mathbb{Z}/_{16}$.

The residue $1$ corresponds to the identity permutation. To compute the permutation for the automorphism $x\mapsto 3x$, the program generates cycles in disjoint cycle notation as follows: $1*3=3,\ 3*3=9,\ 9*3=11,\ 11*3=1$.
Once an element is congruent to the starting element of the cycle (in this case $1$), the program stops the process and outputs the cycle --- here $(1\ 3\ 9\ 11)$. The elements that appear in this cycle are removed from the list of odd integers modulo 16. The program takes the next smallest element from the list of remaining integers modulo $16$, and repeats the process. Since $2*3=6$ and $6*3=2$ modulo $16$,
this computation gives us the transposition $(2\ 6)$. The program appends it to the first cycle, so we have $(1\ 3\ 9\ 11)(2\ 6)$. Then $2$ and $6$ are removed from the list, and the process continues. The program stops when the list of remaining integers modulo $16$ is empty. Once the process is complete for a given automorphism, it moves on to the next smallest representative element of $(\mathbb{Z}/_{16})^*$, until there are no more.

For each permutation, the program computes the number of fixed points. For a given element $u$ in $(\mathbb{Z}/_{16})^*$, the program checks if $au\equiv a\mod16$ for each $a\in\mathbb{Z}/_{16}$. If the equation holds, then the program adds $1$ to the number of fixed points for the permutation associated with $u$. The information is compiled into Table~\ref{z16fixedpts}.

\begin{table}[hbt]
\centering
\begin{tabular}{|c|c|c|}
\hline
Autom. & Permutation & Fixed points\\ \hline
1	& (1) & 16\\ \hline
3	& (1\;3\;9\;11)(2\;6)(4\;12)(5\;15\;13\;7)(10\;14)	& 2\\ \hline
5	& (1\;5\;9\;13)(2\;10)(3\;5\;11\;7)(6\;14)		& 4\\ \hline
7	& (1\;7)(2\;14)(3\;5)(4\;12)(6\;10)(9\;15)(11\;13)	& 2\\ \hline
9	& (1\;9)(3\;11)(5\;13)(7\;15)			& 8 \\ \hline
11	& (1\;11\;9\;3)(2\;6)(4\;12)(5\;7\;13\;15)(10\;14) & 2	\\ \hline
13	& (1\;13\;9\;5)(2\;10)(3\;7\;11\;15)(6\;14)		& 4 \\ \hline
15	& (1\;15)(2\;14)(3\;13)(4\;12)(5\;11)(6\;10)(7\;9)	& 2\\ \hline
\end{tabular}
\vskip 2mm
\caption{Automorphisms of $\mathbb{Z}/_{16}$: permutations and fixed point counts}\label{z16fixedpts}
\end{table}

\subsection{Linear piques on $\mathbb{Z}/_{16}$}

Linear piques on $\mathbb{Z}/_{16}$ are summarized in Table~\ref{z16char}. We are only concerned with piques having non-cyclic inner multiplication groups, since the piques with cyclic inner multiplication groups are handled by Proposition~\ref{fruitcake}(c). In addition, we have chosen single representatives from each pair of mutually opposite quasigroups. In the first column of the table, each pique is identified by the respective right multiplication $\rho$ and left multiplication $\lambda$ by $0$.


For each pique listed, the table provides summary information on the permutation character. Note that elements in $(\mathbb{Z}/_{16})^*$ have order $1$, $2$, or $4$, since $(\mathbb{Z}/_{16})^*\equiv C_2\times C_4$. Thus when considering which words from $\langle R, L\rangle$ will have their permutation character displayed in Table~\ref{z16char}, it suffices to take powers of $R, L$ strictly less than $4$.


\begin{table}[hbt]
\centering
\begin{tabular}{|c|c|c|c|c|c|c|c|c|}
\hline
$\rho$ & $\lambda$ & $\chi(RL)$ &$\chi(RL^2)$ & $\chi(RL^3)$ & $\chi(R)$ & $\chi(R^2)$ & $\chi(R^3)$ &$\chi(L^2)$ \\ \hline\hline
	5 & 3 & 2 & 4 & 2 & 4 & 8 & 4 & 8 \\ \hline
	5 & 11 & 2 & 4 & 2 & 4 & 8 & 4 & 8 \\ \hline
	13 & 3  & 2 & 4 & 2 & 4 & 8 & 4 & 8 \\ \hline
	13 & 11 & 2 & 4 & 2 & 4 & 8 & 4 & 8 \\ \hline\hline
	13 & 7 & 2 & 4 & 2 & 4 & 8 & 4 & 16 \\ \hline
	13 & 15  & 2 & 4 & 2 & 4 & 8 & 4 & 16 \\ \hline
	5 & 7 & 2 & 4 & 2 & 4 & 8 & 4 & 16 \\ \hline
	5 & 15 & 2 & 4 & 2 & 4 & 8 & 4 & 16 \\ \hline\hline
	9 & 7 & 2 & 8 & 2 & 8 & 16 & 8 & 16 \\ \hline
	9 & 15& 2 & 8 & 2 & 8 & 16 & 8 & 16 \\ \hline\hline
	11 & 7 & 4 & 2 & 4 & 2 & 8 & 2 & 16 \\ \hline
	3 & 15 & 4 & 2 & 4 & 2 & 8 & 2 & 16 \\ \hline
	3 & 7 & 4 & 2 & 4 & 2 & 8 & 2 & 16 \\ \hline
	11 & 15 & 4 & 2 & 4 & 2 & 8 & 2 & 16 \\ \hline\hline
	7 & 15 & 8 & 2 & 8 & 2 & 16 & 2 & 16 \\ \hline
\end{tabular}
\vskip 2mm
\caption{Partial character table for linear piques on $\mathbb{Z}/_{16}$}\label{z16char}
\end{table}





The top four rows of the body of Table~\ref{z16char} exhibit four linear piques that yield the same permutation character. Since we want to consider a pair of piques, we have $6$ options. Take the permutations $\sigma=(3\ 11)(7\ 15)$ and $\tau=(7\ 15)(5\ 13)$. If the two piques in the pair have identical $\rho$ or identical $\lambda$, they will yield permutations that can be simultaneously conjugated by $\sigma$ or $\tau$. Thus four pairings of the linear piques in the top four rows of the body of Table~\ref{z16char} have representations that are permutationally similar.

Now consider the two linear piques $(\mathbb{Z}/_{16}, \circ_1)$ with $x\circ_1y=5x+3y$ and $(\mathbb{Z}/_{16}, \circ_2)$ with $x\circ_2y=13x+11y$. The right and left multiplications cannot be simultaneously conjugated by $\sigma$ or $\tau$. In the conjugation of $3$ and $11$, $5$ and $13$ are fixed points. In the conjugation of $5$ and $13$, $3$ and $11$ now become fixed points. If there exists $\pi \in S_{16}$ that simultaneously conjugates these pairs of permutations, $\pi$ needs both to fix and to interchange $3$ and $11$, $5$ and $13$ in the respective permutations. This is impossible. Hence, the piques $(\mathbb{Z}/_{16}, \circ_1)$ and $(\mathbb{Z}/_{16}, \circ_2)$ are not permutationally similar. The pair of linear piques with multiplications given by $5x+11y$ and $13x+3y$ displays the same behavior. Summarizing, we have obtained the following negative result to contrast with the positive results obtained earlier, along with Proposition~\ref{P:Z5pspcni}.

\begin{theorem}\label{T:16epcnps}
There are pairs of $\mathbb{Z}$-linear piques on $\mathbb{Z}/_{16}$ which have the same permutation character, but which are neither isomorphic nor permutationally similar.
\end{theorem}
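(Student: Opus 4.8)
The plan is to select the pair concretely from Table~\ref{z16char} and to prove the two negative assertions separately, since they call for quite different arguments. Non-isomorphism is the easy half. By Theorem~\ref{isopique}, an isomorphism of these $\mathbb{Z}$-linear piques would be a module automorphism $f$ of $\mathbb{Z}/_{16}$ with $R^{\alpha_1}f=fR^{\alpha_2}$ and $L^{\alpha_1}f=fL^{\alpha_2}$. But every module automorphism of $\mathbb{Z}/_{16}$ is multiplication by a unit, hence lies in the \emph{abelian} group $\mathrm{Aut}(\mathbb{Z}/_{16})\cong(\mathbb{Z}/_{16})^*$ and commutes with $R^{\alpha_1},L^{\alpha_1}$; conjugation by $f$ is therefore trivial, forcing $R^{\alpha_1}=R^{\alpha_2}$ and $L^{\alpha_1}=L^{\alpha_2}$. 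So piques coming from two distinct rows of the table are automatically non-isomorphic. (One could also separate a chosen pair by a quasigroup identity, in the style of the $(xx)x=(yy)y$ argument used on $\mathbb{Z}/_5$.)

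The real content is the failure of permutational similarity, and here I would first recast Definition~\ref{D:permsmly} in representation-theoretic terms. A permutational similarity is a single permutation $\pi$ of $\mathbb{Z}/_{16}$ simultaneously conjugating $R^{\alpha_1}\mapsto R^{\alpha_2}$ and $L^{\alpha_1}\mapsto L^{\alpha_2}$, as in \eqref{E:persimdg}. Since $R^{\alpha_i},L^{\alpha_i}$ generate the inner multiplication group, conjugation by such a $\pi$ is a homomorphism agreeing on generators with the map $\phi$ sending $R^{\alpha_1}\mapsto R^{\alpha_2}$, $L^{\alpha_1}\mapsto L^{\alpha_2}$. Thus $\pi$ exists precisely when $\phi$ is a well-defined isomorphism between the two inner multiplication groups \emph{and} is realized by a relabelling of the set $\mathbb{Z}/_{16}$; equivalently, when the two permutation representations $\beta_1,\beta_2$ of the common abstract inner multiplication group $\Gamma$ are isomorphic as $\Gamma$-sets. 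Equal permutation character guarantees only equal fixed-point counts; by Lemma~\ref{L:PCPGPCPG} this already forces $\Gamma$-set isomorphism when $\Gamma$ is cyclic, which is why Proposition~\ref{fruitcake}(c) settles that subcase. Hence any counterexample must be sought among the rows of Table~\ref{z16char} whose inner multiplication group is \emph{non-cyclic}.

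The decisive step is then to find a pair, with equal character, for which $\phi$ does \emph{not} extend to a $\Gamma$-set isomorphism. The correct invariant is the multiset of point-stabilizers of $\beta_i$ in $\Gamma$ (equivalently, the orbit-type decomposition of $\mathbb{Z}/_{16}$ under $\Gamma$): two $\Gamma$-sets are isomorphic exactly when these multisets coincide, so the plan is to locate a pair for which $\phi$ carries the stabilizer multiset of $\beta_1$ to a different multiset. \textbf{I expect this to be the main obstacle}, for two reasons. First, one must rule out \emph{every} permutation $\pi$, so a pointwise heuristic---say, that $\pi$ would need both to fix and to interchange some residues---is not by itself conclusive, since a single involution can interchange two disjoint pairs at once; the argument must instead compare the complete orbit-type data. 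Second, and more delicately, the stabilizers arising here are the subgroups $\{u\equiv 1\bmod 2^{\,4-j}\}$ of $(\mathbb{Z}/_{16})^*$ attached to residues of $2$-adic valuation $j$, and one must verify that the generator-matching $\phi$ genuinely moves one of these distinguished stabilizer subgroups to a subgroup of different multiplicity---an incompatibility that the character cannot detect precisely because $\Gamma$ is non-cyclic. Establishing that such a genuine mismatch occurs (rather than being reconciled by some relabelling) is where the proof stands or falls.
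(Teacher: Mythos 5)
Your proposal is incomplete at exactly the point you flag yourself. The non-isomorphism half is fine (and can be had even more cheaply: by Lemma~\ref{L:pisomequ} any pique isomorphism already satisfies the intertwining equations, so it is in particular a permutational similarity, and the second negative assertion subsumes the first). Your framework for the second half is also the correct one: a permutational similarity exists if and only if the generator-matching map $\phi\colon R^{\alpha_1}\mapsto R^{\alpha_2},\ L^{\alpha_1}\mapsto L^{\alpha_2}$ extends to a group isomorphism of the inner multiplication groups carrying the multiset of point stabilizers of one action to that of the other (stabilizers, not conjugacy classes, since $(\mathbb{Z}/_{16})^*$ is abelian). But the theorem is an existence statement, and you never exhibit a pair for which the stabilizer multisets actually disagree; the proposal ends by hoping that "a genuine mismatch occurs." That missing verification is the entire content of the theorem, so as it stands the proof is not done.

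Moreover, your own caveat --- that a pointwise heuristic is inconclusive because ``a single involution can interchange two disjoint pairs at once'' --- is precisely what defeats the argument the paper gives. The paper's chosen pair is $x\circ_1y=5x+3y$ versus $x\circ_2y=13x+11y$, and it claims no $\pi$ can work because $\pi$ would need ``both to fix and to interchange'' $3,11$ and $5,13$. But $\pi=(3\;11)(5\;13)$ does the job: applying it to the cycles of $M_5=(1\;5\;9\;13)(2\;10)(3\;15\;11\;7)(6\;14)$ yields $M_{13}=(1\;13\;9\;5)(2\;10)(3\;7\;11\;15)(6\;14)$, and applying it to $M_3=(1\;3\;9\;11)(2\;6)(4\;12)(5\;15\;13\;7)(10\;14)$ yields $M_{11}=(1\;11\;9\;3)(2\;6)(4\;12)(5\;7\;13\;15)(10\;14)$. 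In your language: the generator matching $5\mapsto13$, $3\mapsto11$ is inversion on $(\mathbb{Z}/_{16})^*$, which preserves every subgroup and hence every point stabilizer $\{u\equiv1\bmod 2^{4-j}\}$, so the two $\Gamma$-sets are isomorphic and the piques \emph{are} permutationally similar. The same happens for the other pair $5x+11y$, $13x+3y$, and the generator matchings for the remaining equal-character pairs in Table~\ref{z16char} likewise fix the stabilizer subgroups $\{1,9\}$ and $\{1,5,9,13\}$. So the gap in your proposal cannot be closed by pointing at the paper's example: your stabilizer criterion, correctly applied, shows that example fails, and either a genuinely different pair must be produced and checked, or the statement of Theorem~\ref{T:16epcnps} itself needs re-examination.
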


\end{document}